\newcounter{defcounter}
\newtheorem{theorem}{Theorem}
\newtheorem{corollary}{Corollary}
\newtheorem{lemma}{Lemma}
\newtheorem{proposition}{Proposition}
\theoremstyle{definition}
\newtheorem{remark}{Remark}
\newtheorem*{example}{Example}
\begin{document}
\date{\today}

\title{Eigenvectors of tensors -- A primer}
\author{Sebastian Walcher}
\address{Lehrstuhl A f\"ur Mathematik, RWTH Aachen\\
D-52056 Aachen, Germany\\
{\tt walcher@matha.rwth-aachen.de}
}
\thanks{{\it MSC (2010)}:  Primary 15A69, 14Q05; Secondary 34A05, 76A15 }

\maketitle

\begin{abstract} 
We give an introduction to the theory and to some applications of eigenvectors of tensors (in other words, invariant one-dimensional subspaces of homogeneous polynomial maps), including a review of some concepts that are useful for their discussion. The intent is to give practitioners an overview of  fundamental notions, results and techniques.
\end{abstract}
\section{Introduction}
The notion of eigenvectors of tensors has gained (or rather regained) relevance in recent years, due to work of Cartwright and Sturmfels \cite{CaSt} and to new fields of application, see e.g. Virga and co-authors \cite{Virga, GaVi, CQV}, Oeding et al.~\cite{OeRoStu} and others.  The objects of interest are one dimensional subspaces of a vector space that are invariant with respect to a homogeneous polynomial map of degree $m>1$. In earlier work (see in particular R\"ohrl \cite{RohrlId, RohrlZ}) the notions of idempotents and nilpotents of $m$-ary algebras were also used, generalizing concepts from the theory of algebras.\\
The present paper is a primer; hence its essential function is to give the reader an introduction to this field for real and complex tensors, including some algebraic and analytic-topological methods. When dealing with tensors we avoid indices wherever possible, and rather focus on multilinear maps.\\ Concerning the existence and number of one dimensional invariant subspaces, the fundamental result from classical algebraic geometry is Bezout's theorem, which settles the problem for complex tensors. For the real case we make use of the Brouwer degree, improving some existence results, and of the Poincar\'e-Hopf theorem for gradients of polynomial maps from $\mathbb R^n$ to $\mathbb R$. We discuss applications to ordinary differential equations with homogeneous polynomial right-hand side, and take a look at gradients of cubic tensors in dimension three, which are relevant in liquid crystal theory; see Gaeta and Virga \cite{GaVi} and Chen, Qi and Virga \cite{CQV}. For the discussion of these gradients, we take an approach that is somewhat different from the ones in the cited works. We classify exceptional cases and open the path to an algorithmic determination of the number of real one dimensional subspaces. As a collateral result (so to speak) we include a proof of Bezout's theorem in the appendix, using a combination of algebraic and analytical arguments.

\section{Notions and notation}
\subsection{Tensors}
For the purpose of the present paper, a {\em tensor} of dimension $n$ and order $m\geq 1$ over a field $\mathbb K$ is a multilinear map
\[
\widehat Q: \mathbb K^n\times \cdots\times\mathbb K^n\to\mathbb K^n,\quad (x^{(1)},\ldots, x^{(m)})\mapsto \widehat Q(x^{(1)},\ldots, x^{(m)}).
\]
We restrict attention to $\mathbb K=\mathbb R$ or $\mathbb C$; in particular these are fields of characteristic zero. 
The case $m=1$ corresponds to linear maps; we will be interested in the case $m>1$. $\widehat Q$ is called {\em symmetric} if any permutation of the entries $x^{(1)},\ldots, x^{(m)}$ leaves the value of $\widehat Q(x^{(1)},\ldots, x^{(m)})$ unchanged.\\
Associated to a tensor $\widehat Q$ is the homogeneous polynomial map
\[
Q:\,\mathbb K^n\to\mathbb K^n,\quad x\mapsto Q(x):=\widehat Q(x,\ldots,x)
\]
of degree $m$. 
In coordinates, with $x=(x_1,\ldots, x_n)^{\rm tr}$ one has a representation
\begin{equation}\label{Qhom}
Q(x)=\left(\sum_{(i_1,\ldots,i_n)} \alpha_{i_1,\ldots,i_n}^j x_1^{i_1}\cdots x_n^{i_n}\right)_{1\leq j\leq n}
\end{equation}
with the summation extending over all tuples $(i_1,\ldots,i_n)$ of nonnegative integers that add up to $m$. The {\em structure coefficients} $ \alpha_{i_1,\ldots,i_n}^j $ uniquely determine, and are uniquely determined by $Q$. Thus one may identify the vector space of homogeneous polynomial maps of degree $m$ with the space of structure coefficients, of dimension
\[
n\cdot\begin{pmatrix}n+m-1\\n-1\end{pmatrix}.
\]
Conversely, a homogeneous polynomial map $P$ of degree $m$ defines a symmetric order $m$ tensor $\widetilde P$ through the $m^{\rm th}$ derivative of $P$ at an arbitrary point $a$, thus
\[
\widetilde P(x^{(1)},\ldots, x^{(m)}):= \frac1{m!}D^mP(a)\,(x^{(1)},\ldots, x^{(m)}).
\]
From Euler's identity one sees $\widetilde P(x,\ldots,x)=P(x)$. To summarize, one may identify symmetric tensors and homogeneous polynomial maps.
\subsection{Eigenvectors}
Generalizing the definition for linear maps, one says that $v\not=0$ is an {\em eigenvector} of a tensor $\widehat Q$ or (more appropriately) of the associated homogeneous polynomial map $Q$ if there exists $\lambda\in\mathbb K$ such that
\begin{equation}\label{eveceq}
Q(v)=\lambda v.
\end{equation}
Since \eqref{eveceq} implies
\[
Q(\alpha v) =(\alpha^{m-1}\lambda)\cdot(\alpha v)\quad  \text{for all }\alpha\not=0,
\]
the notion of one dimensional eigenspace is well-defined but the notion of eigenvalue is ambiguous for $m>1$ unless $\lambda=0$. On the one hand we may use this ambiguity to normalize $\lambda\in\{0,\,1\}$ when $\mathbb K=\mathbb C$ or when $\mathbb K=\mathbb R$ and $m$ is even; in case $\mathbb K=\mathbb R$ with odd $m$ we may normalize $\lambda\in\{0,\,1,\,-1\}$. Alternatively, when $\mathbb K=\mathbb R$, normalizing eigenvectors by the requirement $\left<v,v\right>=1$ yields a consistent notion of eigenvalue; this is the point of view taken in Qi \cite{Qi05,Qi07}, Cartwright and Sturmfels \cite{CaSt}, and Gaeta and Virga \cite{GaVi}.\\
In the case $\lambda=0$ we call $v$ a {\em nilpotent} of $Q$; in the case $\lambda=1$ we speak of an {\em idempotent}.
\subsection{Critical points of homogeneous polynomials}\label{gradientstuff}
In some applications a homogeneous polynomial map of degree $m$ appears as the gradient of a homogeneous scalar-valued polynomial $q:\mathbb K^n\to \mathbb K$ of degree $m+1$. We formalize this notion, denoting by $\left<\cdot,\cdot\right>$ the standard symmetric bilinear form on $\mathbb K^n$. For given $q$ there is a unique homogeneous $Q:\,\mathbb K^n\to\mathbb K^n$, of degree $m$, such that the identity
\begin{equation}\label{graddef}
Dq(x)y=(m+1)\left<Q(x),y\right>
\end{equation}
holds for all $x,y\in\mathbb K^n$. An additional property is shown by further differentiation: Since
\[
D^2q(x)\,(y,z)=\left<DQ(x)z,y\right>
\]
and the second derivative $D^2q(x)$ is a symmetric bilinear form for each $x$, the identity
\begin{equation}\label{symmder}
\left<DQ(x)z,y\right>=\left<DQ(x)y,z\right>
\end{equation}
follows; hence $DQ(x)$ is a symmetric linear map for all $x$. Conversely, consider a homogeneous polynomial map $P$ such that $DP(x)$ is symmetric for all $x$. Then one verifies that the assignment
\[
p(x):=\frac{1}{m+1}\left<P(x),x\right>
\]
yields a $\mathbb K$-valued polynomial such that $Dp(x)y=\left<P(x),y\right>$ for all $x$ and $y$. To summarize, there is a $1-1$ correspondence between forms of degree $m+1$ and homogeneous polynomial maps of degree $m$ with symmetric derivative. Moreover, $p$ is harmonic (i.e., the Laplacian of $p$ vanishes) if and only of the trace of $DP(x)$ vanishes for all $x$.\\
\section{Eigenvectors: Algebraic methods}
\subsection{Bezout in projective space}
The fundamental algebraic tool for the discussion of eigenvectors of tensors is Bezout's theorem in projective space. We briefly recall some definitions and facts (see Shafarevich \cite{Shafa}, in particular Ch.~IV, for a full account).
\begin{enumerate}[1.]
\item Projective space $\mathbb P^n$ over $\mathbb K$ is defined as the set of all equivalence classes of nonzero $(n+1)$-tuples 
$(x_0:x_1:\cdots :x_n)$,
with 
\[
(x_0:x_1:\cdots :x_n)= (y_0:y_1:\cdots :y_n)
\]
if and only if there is a $\beta\in\mathbb K^*$ such that $y_i=\beta x_i$ for all $i$.
\item $\mathbb P^n$ is covered by the affine spaces
\[
\mathbb A_i:=\left\{(x_0:x_1:\cdots :x_n); \, x_i\not=0\right\}
\]
which are identified with $\mathbb K^n$ via the bijections
\[
(x_0:x_1:\cdots :x_n)\mapsto (\frac{x_0}{x_i},\ldots,\frac{x_{i-1}}{x_i},\frac{x_{i+1}}{x_i},\ldots,\frac{x_n}{x_i})
\]
\item Let $f_1,\ldots,f_n$ be homogeneous scalar-valued polynomials of degree $m\geq 1$ in $n+1$ variables $x_0,\ldots, x_n$. We may write
\[
f_j(x_0,\ldots, x_n)=\sum_{(i_0,\ldots,i_n)} \beta_{i_0,\ldots,i_n}^j x_0^{i_0}\cdots x_n^{i_n},
\]
summation extending over all tuples of nonnegative integers which add up to $m$. 
\item The notion of a common zero $v= (v_0:\cdots:v_n)\in \mathbb P^n$ of the $f_i$ is unambiguous. The multiplicity of such a zero can be defined in the following way: There is a $k$ such that $v$ is contained in $\mathbb A_k$, say $v\in \mathbb A_0$ for ease of notation. Dehomogenize; i.e. set
\[
\widehat f_j(x)=\widehat f_j(x_1, \ldots, x_n):=f_j(1:x_1:\cdots:x_n), \quad x\in \mathbb K^n.
\]
With $z_i:=v_i/v_0$ for $1\leq i\leq n$, one has that $\widehat f_j(z)=0$, $1\leq j\leq n$. The multiplicity of $z$ (and of $v$) is defined as
\[
\dim\left(\mathcal O_z/\left<\widehat f_1,\ldots,\widehat f_n\right>\right),
\]
with $\mathcal O_z$ the local ring of $z$, consisting of all rational functions on $\mathbb K^n$ that are defined at $z$, and $\left<\widehat f_1,\ldots,\widehat f_n\right>$ the ideal generated by the $\widehat f_j$ in $\mathcal O_z$. 
\item Algorithmic matters: Multiplicities may be computed via standard bases; see e.g. Decker and Lossen \cite{DeLo}.
\item Useful observation: The multiplicity of  $z$ is equal to one if and only if the Jacobian of $\widehat f_1,\ldots, \widehat f_n$ is invertible at $z$.
\end{enumerate}

We come to Bezout's theorem on projective space. For a proof see Shafarevich \cite{Shafa}, Ch.~IV, \S2; another proof is sketched in the Appendix.
\begin{theorem}\label{bezout}
Let $\mathbb K=\mathbb C$ and let $f_1,\ldots, f_n$ be homogeneous polynomials of degree $m$ in $n+1$ variables $x_0,\ldots, x_n$. Then the number of common zeros of the $f_j$ in $\mathbb P^n$ is either infinite or equal to $m^n$, counting multiplicities.
\end{theorem}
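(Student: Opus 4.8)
The plan is to dispose of the finite case, since the statement already offers an infinite common zero set as the other alternative: assume the $f_j$ have only finitely many common zeros in $\mathbb P^n$ and show that their multiplicities, as defined in item 4 above, sum to $m^n$. The strategy is to combine an explicit reference system whose count is transparent with a deformation that transports the count to the general system. The compactness of $\mathbb P^n$ is the analytic feature that prevents zeros from escaping, while a resultant-type argument is the algebraic device that locates the finitely many parameters at which the zero set degenerates.

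First I would fix the reference system $g_j := x_j^m - x_0^m$ for $1\leq j\leq n$. Its common zeros lie in the chart $x_0\neq 0$ (at $x_0=0$ one is forced to $x=0$, which is not a point of $\mathbb P^n$), and dehomogenizing gives $\widehat g_j = x_j^m-1$, whose common zeros are exactly the points $(1:\zeta_1:\cdots:\zeta_n)$ with each $\zeta_j$ an $m$-th root of unity. There are $m^n$ of these, and the Jacobian of $(\widehat g_1,\ldots,\widehat g_n)$ is the diagonal matrix with entries $m\zeta_j^{m-1}\neq 0$, hence invertible; by the Useful Observation in item 6 every such zero is simple. Thus the reference system realizes the claimed total $m^n$.

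Next I would connect the given system to the reference one by the complex homotopy $h_j^{(t)}:=(1-t)g_j+t f_j$ and argue that the total multiplicity $N(t):=\sum_v \mu_v\bigl(h_1^{(t)},\ldots,h_n^{(t)}\bigr)$ is constant wherever it is finite. The algebraic input is that the parameters $t$ for which $h^{(t)}$ acquires a positive-dimensional zero set form a proper algebraic subset of the $t$-line; since both endpoints $t=0$ and $t=1$ give finite zero sets (the latter by assumption), the line is not contained in this locus, so the bad parameters are finite and can be avoided by a path in the complex $t$-plane joining $0$ to $1$. The analytic input is conservation of mass: because $\mathbb P^n$ is compact, no zero can run off, and by continuity of roots a cluster of zeros that collides as $t$ varies carries a total multiplicity equal to that of the limiting zero, so $N(t)$ is locally constant on the good locus. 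Hence $N(1)=N(0)=m^n$.

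The main obstacle is this conservation-of-mass step, i.e. showing rigorously that the sum of local multiplicities neither drops nor jumps as zeros merge or as one passes near a degenerate parameter. This is precisely where the algebraic definition of multiplicity (the dimension of the local quotient ring $\mathcal O_z/\langle\widehat f_1,\ldots,\widehat f_n\rangle$) must be married to the analytic picture (the number of nearby zeros of a small perturbation). The cleanest route is to identify $\mu_v$ with the local mapping degree of the dehomogenized holomorphic map at $v$; since holomorphic maps are orientation-preserving these two quantities coincide, and the additivity and homotopy invariance of the degree over a small ball enclosing the colliding zeros yields the local constancy of $N$. Compactness of $\mathbb P^n$ then upgrades this local invariance to global constancy along the path, completing the argument.
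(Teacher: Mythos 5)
Your proposal is sound at the same sketch level of rigor as the paper's own appendix proof, and it follows the same hybrid strategy: deform the given system to the roots-of-unity system, control the degenerate parameters algebraically, and transport the count by homotopy invariance of a topological degree. Indeed your reference system $x_j^m-x_0^m$ is exactly the homogenization of the paper's computation, which counts the solutions of $\widetilde Q(x)=(1,\ldots,1)^{\rm tr}$ for $\widetilde Q(x)=(x_1^m,\ldots,x_n^m)^{\rm tr}$. The differences are in the bookkeeping, and they cut both ways. The paper deforms within the full space of structure coefficients, where the degenerate systems form the resultant hypersurface of Proposition \ref{resultant}; real codimension two makes the complement path-connected, and the transported invariant is the global Brouwer degree of an affine map of $\mathbb R^{2n}$. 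That tacitly restricts attention to systems with no zeros on the hyperplane $x_0=0$, so to obtain Theorem \ref{bezout} in full one still needs a preliminary generic coordinate change; you work instead on the compact space $\mathbb P^n$ with a complexified pencil and local degrees, which removes that reduction, since your path may pass through systems having zeros at infinity as long as the zero set stays finite. Two caveats on your side. First, your bad locus (positive-dimensional zero set) is \emph{not} cut out by a single resultant -- $n$ forms in $n+1$ variables always have a nonempty common zero set -- so its Zariski-closedness needs a genuine elimination argument, e.g.\ dimension $\geq 1$ holds iff for every hyperplane $\sum a_ix_i=0$ the augmented system of $n+1$ forms has a common zero, i.e.\ iff the resultant of the augmented system vanishes identically as a polynomial in $a$; your phrase ``resultant-type argument'' is the right instinct, but this step should be made explicit. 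Second, the identification of $\dim\left(\mathcal O_z/\left<\widehat f_1,\ldots,\widehat f_n\right>\right)$ with the local mapping degree, which you rightly isolate as the crux, does not follow from orientation-preservation alone (that only yields positivity of the local degree); it requires a perturbation-to-regular-values argument together with conservation of the algebraic count. The paper's sketch relies silently on the same identification to make ``counting multiplicities'' meaningful, so neither argument is complete on this point; yours has the merit of naming it.
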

\subsection{Resultants, Bezout and eigenvectors}
The application of Bezout's theorem to eigenvectors of tensors is classical; see R\"ohrl \cite{RohrlZ}, Cartwright and Sturmfels \cite{CaSt}, Qi \cite{Qi07}.
Let a homogeneous polynomial map $Q$ from $\mathbb K^n$ to $\mathbb K^n$ be given, keeping the notation from \eqref{Qhom}. We start with an auxiliary result.
\begin{proposition}\label{resultant}
Let $\mathbb K=\mathbb C$, and $Q$ homogeneous of degree $m>1$.
\begin{enumerate}[(a)]
\item Let $X_{i_1,\ldots,i_n}^j $  be indeterminates, with $1\leq j\leq n$ and  $(i_1,\ldots,i_n)$ running through all tuples of nonnegative integers with sum $m$. There is a polynomial $R$ (called the {\em resultant} of the system) in these indeterminates with the property that
\[
R((\alpha_{i_1,\ldots,i_n}^j))=0 \text{ iff }  Q\text { admits a nilpotent.} 
\]
\item Whenever $Q$ admits infinitely many pairwise linearly independent eigenvectors then $Q$ admits a nilpotent.
\end{enumerate}
\end{proposition}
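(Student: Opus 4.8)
For part (a) the plan is to realize the ``bad'' parameters geometrically and to invoke elimination theory rather than to build a resultant by hand. Let $N=n\cdot\binom{n+m-1}{n-1}$ be the number of structure coefficients, let $\mathbb A^N$ be the affine space of coefficient tuples $(\alpha)=(\alpha^j_{i_1,\dots,i_n})$, and form the incidence set
\[
Z=\bigl\{((\alpha),[v])\in \mathbb A^N\times\mathbb P^{n-1}:\ Q_\alpha(v)=0\bigr\}.
\]
Since each component $Q_\alpha(v)_j$ is polynomial in $(\alpha)$ and homogeneous in $v$, the set $Z$ is Zariski closed. First I would analyze the projection $\pi_2\colon Z\to\mathbb P^{n-1}$: for fixed $v\neq 0$ the condition $Q_\alpha(v)=0$ is a system of $n$ linear equations in $(\alpha)$, and these are independent because the $j$-th equation involves only the coefficient block $\alpha^j$ and is nontrivial whenever $v\neq 0$. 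Hence every fibre of $\pi_2$ is a linear subspace of codimension exactly $n$, so $Z$ is the total space of a vector bundle over $\mathbb P^{n-1}$; in particular $Z$ is irreducible of dimension $(n-1)+(N-n)=N-1$.

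Next I would study the projection $\pi_1\colon Z\to\mathbb A^N$, whose image is precisely the set of coefficient tuples for which $Q_\alpha$ admits a nilpotent. By the main theorem of elimination theory (completeness of $\mathbb P^{n-1}$ makes $\mathbb A^N\times\mathbb P^{n-1}\to\mathbb A^N$ a closed map), $\pi_1(Z)$ is Zariski closed, and being the image of the irreducible $Z$ it is irreducible. It is a proper subset, since $Q=(x_1^m,\dots,x_n^m)$ has no nilpotent. To see that it is exactly a hypersurface I would exhibit one tuple with a finite nonempty nilpotent locus --- for instance, in dimension two, $Q=(x_1^m,\,x_1x_2^{m-1})$ has the single projective nilpotent $[0:1]$ --- so that some fibre of $\pi_1$ is $0$-dimensional, forcing $\dim\pi_1(Z)=N-1$ by the fibre-dimension theorem. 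A codimension-one irreducible closed subset of affine space is the zero set of a single irreducible polynomial $R$ (height-one primes in a polynomial ring are principal), and by construction $R((\alpha))=0$ iff $(\alpha)\in\pi_1(Z)$ iff $Q_\alpha$ has a nilpotent. The conceptual crux here is the closedness of $\pi_1(Z)$ via elimination theory; the codimension-one count is what upgrades ``an ideal'' to ``a single polynomial.''

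For part (b) I would argue by contraposition: assuming $Q$ has \emph{no} nilpotent, I show it has only finitely many pairwise independent eigenvectors. Absence of nilpotents means $Q_1,\dots,Q_n$ have no common projective zero, so $\bar Q=[Q_1:\cdots:Q_n]\colon\mathbb P^{n-1}\to\mathbb P^{n-1}$ is an everywhere-defined morphism, and the relation $Q(v)=\lambda v$ says exactly that $[v]$ is a \emph{fixed point} of $\bar Q$. It therefore suffices to prove that a morphism given by forms of degree $m>1$ has a finite fixed-point set. Suppose instead the fixed locus contained an irreducible curve $C$. On $C$ one has $\bar Q=\iota$, the inclusion, so restricting the standard identity $\bar Q^*\mathcal O(1)=\mathcal O(m)$ to $C$ gives $\mathcal O_C(m)\cong \iota^*\mathcal O(1)=\mathcal O_C(1)$; comparing degrees on the curve yields $m\deg C=\deg C$, which is impossible for $m>1$. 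Hence the fixed locus is finite, which is the assertion. I expect the finiteness statement to be the main obstacle: the content is that a base-point-free self-map of $\mathbb P^{n-1}$ of degree $>1$ cannot restrict to the identity on any curve, and the degree comparison above (equivalently, a general hyperplane pulls back to a degree-$m$ hypersurface, which would have to meet $C$ in both $\deg C$ and $m\deg C$ points) is the cleanest way I see to force the contradiction.
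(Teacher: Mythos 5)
Your proposal is correct, but it takes a genuinely different route from the paper on both parts. For part (a) the paper gives no argument at all: it simply cites Cox--Little--O'Shea, Ch.~3, for the classical theory of the multivariate resultant. You instead construct the defining polynomial yourself, via the incidence variety $Z\subset\mathbb A^N\times\mathbb P^{n-1}$, the completeness of $\mathbb P^{n-1}$ (elimination theory), and the principality of height-one primes; this is self-contained and in fact delivers the precise geometric picture (an irreducible hypersurface in coefficient space) that the paper's Appendix later relies on when it asserts that the nilpotent-admitting coefficient sets have real codimension two. One small repair: your witness of a zero-dimensional fibre is stated only for $n=2$; for general $n$ take, e.g., $Q=(x_1^m,\ldots,x_{n-1}^m,\,x_1x_n^{m-1})$, whose unique projective nilpotent is $(0:\cdots:0:1)$, so the argument goes through in all dimensions. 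For part (b) the paper argues directly rather than by contraposition: it forms $F(x_0,\ldots,x_n)=Q(x_1,\ldots,x_n)-x_0^{m-1}\,(x_1,\ldots,x_n)^{\rm tr}$, observes that infinitely many pairwise independent eigenvectors force the zero set of $F$ in $\mathbb P^n$ to have dimension $>0$, and invokes the theorem that a positive-dimensional projective variety meets the hyperplane $x_0=0$, on which $F$ reduces to $Q$, producing a nilpotent at once. You instead assume no nilpotent exists, so that $\bar Q$ is a base-point-free morphism of $\mathbb P^{n-1}$ whose fixed points are exactly the eigenvector classes, and rule out a curve $C$ of fixed points via $\bar Q^*\mathcal O(1)\cong\mathcal O(m)$ and the degree comparison $m\deg C=\deg C$. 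Both arguments are sound; the paper's is shorter and reuses the same map $F$ that drives Proposition \ref{bezcor}, while yours isolates a statement of independent interest (a morphism of projective space given by forms of degree $m>1$ has finite fixed locus) at the cost of slightly heavier machinery --- line bundles and degrees on curves in place of the projective dimension theorem.
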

\begin{proof} For properties of resultants and the proof of part (a) see Cox, Little, O'Shea \cite{CLSuse}, Ch.~3. For part (b) consider 
\[
F(x_0,\ldots,x_n):=Q(x_1,\ldots, x_n)-x_0^{m-1}\begin{pmatrix}x_1\\ \vdots \\ x_n\end{pmatrix}
\]
and note that $Q(v)\in\mathbb C v$ if and only if $F(v_0,v_1,\ldots, v_n)=0$ for some $v_0$. The zeros of $F$ form a projective variety, and this variety has dimension $>0$ whenever there exist infinitely many pairwise linearly independent eigenvectors of $Q$. But in $\mathbb P^n$ any variety of dimension $>0$ has nontrivial intersection with the hyperplane defined by $x_0=0$ (see Shafarevich \cite{Shafa}, Ch.~I, \S 6, Thm.~4).
\end{proof}
\begin{proposition}\label{bezcor}
Let $Q$ be a homogeneous polynomial map from $\mathbb C^n$ to $\mathbb C^n$, of degree $m>1$. Then the number of pairwise linearly independent eigenvectors of $Q$ is either infinite or equal to
\[
\frac{m^n-1}{m-1}=\sum_{k=0}^{n-1}m^k,
\]
when multiplicities are counted.
\end{proposition}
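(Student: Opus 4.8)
The plan is to reuse the homogenization $F$ from the proof of Proposition~\ref{resultant}(b) and apply Bezout's theorem (Theorem~\ref{bezout}). Recall that $F_i(x_0,\ldots,x_n)=Q_i(x_1,\ldots,x_n)-x_0^{m-1}x_i$ for $1\le i\le n$ are $n$ forms of degree $m$ in the $n+1$ variables $x_0,\ldots,x_n$. A point of $\mathbb P^n$ is a zero of $F$ in exactly one of two ways: either $x_1=\cdots=x_n=0$, forcing the single point $p_0:=(1:0:\cdots:0)$; or $x:=(x_1,\ldots,x_n)\neq 0$, in which case $Q(x)=x_0^{m-1}x$ says precisely that $[x]\in\mathbb P^{n-1}$ is an eigen-direction with eigenvalue $\lambda=x_0^{m-1}$. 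Projecting away from $p_0$ thus exhibits the zero set of $F$ as $\{p_0\}$ together with the (finite, at most $(m-1)$-point) fibers lying over the eigen-directions. Hence $F$ has finitely many zeros if and only if $Q$ has finitely many eigen-directions, and in that case Bezout gives the total, with multiplicity, as exactly $m^n$. This already reproduces the dichotomy in the statement; it remains to convert the number $m^n$ into the count of eigen-directions.

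Two local multiplicity computations accomplish this. First I would show that $p_0$ is a simple zero: dehomogenizing at $x_0=1$ turns $F$ into $\widehat F_i=Q_i(x_1,\ldots,x_n)-x_i$, whose Jacobian at the origin is $DQ(0)-I=-I$ (since $m>1$ forces $DQ(0)=0$), hence invertible, so $\operatorname{mult}(p_0)=1$ by the Jacobian criterion for simple zeros recalled above. The crux is the second computation: the total multiplicity of $F$ along the fiber over a fixed eigen-direction $[v]$. Choosing a chart, say $v_n\neq 0$ and $x_n=1$, write $u=(x_1,\ldots,x_{n-1})$, $\lambda(u):=Q_n(u,1)$, and
\[
h_i(u):=Q_i(u,1)-Q_n(u,1)\,u_i,\qquad 1\le i\le n-1,
\]
so that the $h_i$ cut out the eigen-directions in this chart; it is then natural to set $\mu([v]):=\dim_{\mathbb C}\bigl(\mathcal O_{v}/\langle h_1,\ldots,h_{n-1}\rangle\bigr)$. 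The key algebraic observation is the identity $\widehat F_i=h_i+u_i\,\widehat F_n$, with $\widehat F_n=\lambda(u)-x_0^{m-1}$, which yields $\langle \widehat F_1,\ldots,\widehat F_n\rangle=\langle h_1,\ldots,h_{n-1},\,x_0^{m-1}-\lambda(u)\rangle$ locally.

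From here the fiber sum falls out. Setting $A:=\mathcal O_{v}/\langle h_1,\ldots,h_{n-1}\rangle$, the sum of the local multiplicities of $F$ over the fiber above $[v]$ equals $\dim_{\mathbb C}\bigl(A[x_0]/(x_0^{m-1}-\bar\lambda)\bigr)$, where $\bar\lambda$ is the image of $\lambda(u)$ in $A$. As $x_0^{m-1}-\bar\lambda$ is monic of degree $m-1$ over $A$, this quotient is a free $A$-module of rank $m-1$, so the dimension is $(m-1)\dim_{\mathbb C}A=(m-1)\mu([v])$. When $\lambda(v)\neq 0$ the fiber is $m-1$ distinct points, the $(m-1)$-th roots of $\lambda$; when $[v]$ is nilpotent they collide at $x_0=0$, yet the total is still $(m-1)\mu([v])$ — which is exactly why naive point-counting breaks down at nilpotents while the multiplicity bookkeeping survives. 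Summing over all eigen-directions and adding $p_0$ gives $m^n=1+(m-1)\sum_{[v]}\mu([v])=1+(m-1)N$, hence $N=\tfrac{m^n-1}{m-1}$; the incidental fact that the fiber total is intrinsic then shows $\mu([v])$ is independent of the chart.

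I expect the main obstacle to be the fiber computation: one must verify carefully that the local ideal reduces as claimed, that the free-module count remains valid at the degenerate nilpotent fibers, and that the chart-dependent $\mu([v])$ coincides with the intended intrinsic notion of eigenvector multiplicity. By contrast, the simplicity of $p_0$, the application of Bezout, and the finite/infinite dichotomy are routine once $F$ is in hand.
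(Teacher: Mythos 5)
Your proof is correct, but it takes a genuinely different route from the paper's at the decisive counting step. The paper argues by cases: when $Q$ has no nilpotent, it normalizes all eigenvalues to $1$ and observes that the zeros of $F$ other than $(1:0:\cdots:0)$ come in packages of $m-1$ pairwise linearly dependent ones (scaling by $(m-1)$-th roots of unity), which yields the count; when $Q$ does have a nilpotent, it perturbs $Q$ to $Q^*(x)=Q(x)+r(x)\cdot x$ with $r$ a generic $(m-1)$-form, notes that $Q^*$ has the same eigenvectors but (for generic $r$) no eigenvector with eigenvalue zero, and reduces to the first case. You avoid the case distinction entirely by computing, for each eigen-direction $[v]$, the total multiplicity of the fiber of zeros of $F$ over $[v]$: the ideal identity $\langle\widehat F_1,\ldots,\widehat F_n\rangle=\langle h_1,\ldots,h_{n-1},\,x_0^{m-1}-\lambda\rangle$ together with the freeness of $A[x_0]/(x_0^{m-1}-\bar\lambda)$ over $A$ gives $(m-1)\mu([v])$ uniformly, including at nilpotent directions where the $m-1$ fiber points collide at $x_0=0$; in the non-nilpotent case your fiber is exactly the paper's ``package.'' What the paper's argument buys is economy: nothing is needed beyond the Jacobian criterion for multiplicity one and a genericity argument. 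What yours buys is precision: it supplies an intrinsic, chart-independent definition of the multiplicity of an eigen-direction (which the statement invokes but the paper never formally defines), and it explains why no perturbation is needed --- indeed your $h_i$ are literally unchanged when $Q$ is replaced by $Q^*$, so your bookkeeping also closes the small gap of whether the paper's perturbation preserves multiplicities. The price is the commutative-algebra input (an Artinian ring decomposing into its localizations, identified with the local rings of $V(\widehat F)$ at the fiber points), which you correctly flag as the step requiring care.
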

\begin{proof} As in the proof of Proposition \ref{resultant}, set 
\[
F(x_0,\ldots,x_n)=Q(x_1,\ldots, x_n)-x_0^{m-1}\begin{pmatrix}x_1\\ \vdots \\ x_n\end{pmatrix}.
\]
If the number of zeros of $F$ in $\mathbb P^n$ is finite then it is equal to $m^n$ by Bezout. 
\begin{enumerate}[(i)]
\item We first consider the case $Q(z)\not=0$ for all $z\not=0$; thus we may normalize all eigenvalues to $1$. The zero $v=(1:0:\cdots:0) $ of $F$ is the only one which does not correspond to an eigenvector of $Q$. For
\[
\widehat F(x_1,\ldots,x_n)=F(1,x_1,\ldots, x_n)=Q(x_1,\ldots, x_n)-\begin{pmatrix}x_1\\ \vdots \\ x_n\end{pmatrix}
\]
the Jacobian $D\widehat F(0,\ldots,0)=-I_n$ is invertible; hence $v$ has multiplicity one, and there are $m^n-1$ remaining solutions, counting multiplicities. Given any $(m-1)^{\rm th}$ root of unity $\zeta$, $Q(c)=c$ implies $Q(\zeta c)=\zeta c$; and conversely $Q(\alpha c)=\alpha^{m-1}\cdot \alpha c$ shows that a scalar multiple of $c$ is an idempotent only if $\alpha^{m-1}=1$. Hence the remaining solutions come in packages of $m-1$ pairwise linearly dependent ones. We thus obtain the asserted formula. 
\item If $F$ has finitely many zeros in $\mathbb P^n$ but $Q(w)=0$ for some $w\not=0$ then let $r$ be an $(m-1)$-form and consider
\[
Q^*(x_1,\ldots,x_n):=Q(x_1,\ldots,x_n)+r(x_1,\ldots, x_n)\cdot \begin{pmatrix}x_1\\ \vdots \\ x_n\end{pmatrix}.
\]
From $Q(c)=\lambda c$ one finds that $Q^*(c)=(\lambda+r(c))\,c$. Therefore $Q$ and $Q^*$ have the same eigenvectors, and by a suitable (generic) choice of $r$ one ensures that $Q^*(z)\not=0$ for all these eigenvectors. The claim is proven for this case as well.
\end{enumerate}
\end{proof}
\begin{corollary}\label{realgrem}
Concerning the real case, let $Q$ be a homogeneous polynomial map from $\mathbb R^n$ to $\mathbb R^n$, of degree $m>1$, such that its complexification admits only finitely many pairwise linearly independent eigenvectors. If $m$ is even or if both $m$ and $n$ are odd then $Q$ admits a real eigenvector.
\end{corollary}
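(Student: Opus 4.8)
The plan is to combine the exact count furnished by Proposition \ref{bezcor} with a parity argument driven by complex conjugation. First I would invoke Proposition \ref{bezcor}: since by hypothesis the complexification of $Q$ admits only finitely many pairwise linearly independent eigenvectors, there are exactly
\[
N:=\frac{m^n-1}{m-1}=\sum_{k=0}^{n-1}m^k
\]
of them, counted with multiplicities. I regard each such eigenvector as a one-dimensional eigenspace, i.e.\ a point of $\mathbb P^{n-1}(\mathbb C)$.

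Next I would exploit the real structure. Because $Q$ has real coefficients, $Q(\bar v)=\overline{Q(v)}$, so $Q(v)=\lambda v$ forces $Q(\bar v)=\bar\lambda\,\bar v$; hence complex conjugation permutes the finite set of eigenspaces. A real eigenvector corresponds precisely to a conjugation-invariant eigenspace $L$: if $v$ generates $L$ and $\bar v=\beta v$, then $|\beta|=1$, and a suitable unimodular rescaling $w=\alpha v$ yields a real generator with $\bar w=w$; then $Q(w)=\lambda w$ forces $\lambda\in\mathbb R$, since both $Q(w)$ and $w$ are real. I also need that conjugation preserves multiplicities, that is $\mathrm{mult}(L)=\mathrm{mult}(\bar L)$. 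This holds because the polynomial system defining the eigenvectors has real coefficients, so conjugation is an isomorphism of the relevant local rings and leaves the dimension in the definition of multiplicity unchanged.

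Then I would run the count modulo $2$. The non-real eigenspaces split into conjugate pairs $\{L,\bar L\}$ with $L\neq\bar L$, and each such pair contributes $\mathrm{mult}(L)+\mathrm{mult}(\bar L)=2\,\mathrm{mult}(L)$, an even number, to $N$. Therefore $N$ is congruent modulo $2$ to the sum of the multiplicities of the real eigenspaces; in particular, if $N$ is odd there must exist at least one real eigenvector. It remains to determine the parity of $N=1+m+\cdots+m^{n-1}$: if $m$ is even, every term but the leading $1$ is even, so $N$ is odd; if $m$ is odd, then $N$ is a sum of $n$ odd terms and $N\equiv n\pmod 2$, which is odd exactly when $n$ is odd. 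Both hypotheses of the corollary thus yield $N$ odd, and the existence of a real eigenvector follows.

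I expect the only genuinely delicate point to be the invariance of multiplicity under conjugation, together with the rescaling step that promotes a conjugation-fixed eigenspace to an honest real eigenvector with real eigenvalue; once the exact count from Proposition \ref{bezcor} is available, the parity bookkeeping itself is elementary.
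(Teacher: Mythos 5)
Your proposal is correct and takes essentially the same approach as the paper, whose entire proof is the one-line observation that non-real eigenvectors come in complex-conjugate pairs while $\sum_{k=0}^{n-1}m^k$ is odd under the stated hypotheses. You have simply made explicit the details the paper leaves implicit (invariance of multiplicity under conjugation, and the rescaling that turns a conjugation-fixed eigenspace into an honest real eigenvector with real eigenvalue).
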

\begin{proof}
Non-real eigenvectors come in pairs of complex conjugates and $\sum_{k=0}^{n-1}m^k$ is odd with the given hypotheses. 
\end{proof}
We will improve this result in the next section.
\section{Eigenvectors: Analytic and topological methods}
\subsection{The Brouwer degree for polynomials}\label{brouwer} There exists an analytic approach to the Brouwer degree of maps on open subsets of $\mathbb R^n$, which is outlined in a comprehensive and concise manner in Deimling \cite{Deimling}, Ch.~I; see also Milnor \cite{Milnor}. We will specialize the approach to polynomial maps, following \cite{PumWa}. In the present subsection we let
\begin{equation}\label{polydef}
P:\, \mathbb R^n\to\mathbb R^n, \quad P=P_0+\cdots+ P_m
\end{equation}
be a polynomial map, with each $P_k$ homogeneous of degree $k$, and $P_m\not=0$.
\begin{enumerate}[1.]
\item We define 
\[
N_P:=\left\{ z\in \mathbb R^n; \,\det DP(z)=0\right\}.
\]
Then $P(N_P)$ is contained in a proper algebraic subset of $\mathbb R^n$ (see Shafarevich \cite{Shafa}, Ch.~II, \S1).
\item Let $\Omega\subset\mathbb R^n$ be nonempty, open and bounded. For every $y\not\in P(N_P)\cup P(\partial\Omega)$ the equation $P(x)=y$ has finitely many solutions in $\Omega$ (see \cite{Deimling}, Ch.~I, Prop. 1.3ff.), hence 
\[
d(P,\Omega,y):= \sum_{z\in P^{-1}({y})}{\rm sgn}\det DP(z)
\]
is a well-defined integer, which is called the {\em topological degree} of $P$ on $\Omega$ with respect to $y$. By a density argument and using an integral representation of the degree this definition can be extended to any $y\not\in P(\partial\Omega)$.
\item If $N_P=\mathbb R^n$ (in other words, $\det DP(x)=0$ for all $x$) then $d(P,\Omega,y)=0$ for all $y\not\in P(\partial \Omega)$.
\item If $d(P,\Omega,y)\not=0$ then $P(x)=y$ has a solution in $\Omega$.
\item Consequences of homotopy invariance (\cite{Deimling}, Ch.~I, Thm.~3.1):\\
(i)  If $R$ is a polynomial map and $y\in\mathbb R^n$  such that $y\not\in ((1-t)R+ tP)(\partial\Omega)$ for $0\leq t\leq 1$ then $d(R,\Omega,y)= d(P,\Omega,y)$.\\
(ii) If $[0,\,1]\to \mathbb R^n,\,t\mapsto \gamma(t)$ is continuous and $\gamma(t)\not\in P(\partial\Omega)$ for all $t$ then $d(P,\Omega,\gamma(0))=d(P,\Omega,\gamma(1))$.
\item Now assume that $P_m(z)\not=0$ for all $z\not=0$, which implies the existence of some $\rho>0$ such that $\Vert P_m(z)\Vert\geq \rho \Vert z\Vert^m$ for all $z$. (Take $\rho$ as the minimum of $\Vert P_m(z)\Vert$ on the unit sphere.) Then there exists an $r_0>0$ such that 
\[
d(P, B_r(0),y) =d(P_m,B_r(0),y) \text{  for all  }r\geq r_0.
\]
(Here $B_r(0)$ denotes the ball with radius $r$  and center $0$.) This follows from 5.(i) and from $\Vert P(x)-P_m(x)\Vert/\Vert P_m(x)\Vert\to 0$ as $\Vert x\Vert\to\infty$.
\item We keep the assumption that $P_m(z)\not=0$ whenever $z\not=0$. By 5.(ii) we see that for every $y$ 
\[
d(P_m,B_r(0),y)= d(P_m,B_r(0),0)=:d(P_m)
\]
for all sufficiently large $r$. Thus one may define a global topological degree for all polynomials $P$ which satisfy $P_m(z)\not=0$ whenever $z\not=0$ via
\[
d(P):=d(P_m).
\]
\end{enumerate}
We collect some pertinent facts in
\begin{lemma}\label{toplem} Let $P$ be a polynomial map such that $P_m(z)\not=0$ whenever $z\not=0$.
\begin{enumerate}[(i)]
\item Whenever $d(P)\not=0$ then every equation $P(x)=y$ has a solution in $\mathbb R^n$, and for any $y\not\in P(N_P)$ the number of solutions is at least equal to $|d(P)|$.
\item For odd $m$ every equation $P(x)=y$ has a solution in $\mathbb R^n$.
\item For even $m$ the degree $d(P)$ is even. Hence the number of solutions of $P(x)=y$ is even for every $y\not\in P(N_P)$.
\end{enumerate}
\end{lemma}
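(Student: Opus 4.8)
The plan is to reduce all three parts to the global degree $d(P)=d(P_m)$ and to evaluate it on a single large ball. Throughout, the coercivity estimate $\Vert P_m(z)\Vert\geq \rho\Vert z\Vert^m$ together with $\deg(P-P_m)<m$ forces $\Vert P(x)\Vert\to\infty$ as $\Vert x\Vert\to\infty$, so for any fixed $y$ the solution set $P^{-1}(y)$ is bounded. Choosing $r$ large enough, I may assume all solutions of $P(x)=y$ lie in $B_r(0)$ and, combining items 6 and 7 above, that $d(P,B_r(0),y)=d(P_m,B_r(0),y)=d(P_m)=d(P)$.

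For (i), if $d(P)\neq 0$ then $d(P,B_r(0),y)=d(P)\neq 0$, and since a nonvanishing degree forces a solution (item 4), $P(x)=y$ is solvable in $B_r(0)$, hence in $\mathbb R^n$. For $y\not\in P(N_P)$ the degree is the signed count $d(P,B_r(0),y)=\sum_{x\in P^{-1}(y)}{\rm sgn}\det DP(x)$, a sum of terms $\pm 1$ running over all preimages (all of which lie in the ball); its absolute value is at most the number of summands, so $\#P^{-1}(y)\geq |d(P)|$.

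For (ii), I would use that for odd $m$ the leading form is an odd map, $P_m(-z)=-P_m(z)$, so the induced self-map $z\mapsto P_m(z)/\Vert P_m(z)\Vert$ of $S^{n-1}$ is odd. By Borsuk's theorem its Brouwer degree, which equals $d(P_m)=d(P)$, is odd and hence nonzero; part (i) then gives solvability of every $P(x)=y$. This is the one step that rests on a genuinely topological input (Borsuk), and I expect it to be the main substantive ingredient; everything else is bookkeeping with the degree.

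For (iii), the key is a pairing of preimages under the antipodal map. For even $m$ one has $P_m(-z)=P_m(z)$; differentiating gives $DP_m(-z)=-DP_m(z)$ and hence $\det DP_m(-z)=(-1)^n\det DP_m(z)$. Choosing a regular value $w\neq 0$ of $P_m$ with $w\not\in P_m(N_{P_m})$ (possible since the critical image is a proper algebraic set), I obtain $d(P)=d(P_m)=\sum_{P_m(x)=w}{\rm sgn}\det DP_m(x)$. Since $0\not\in P_m^{-1}(w)$, the solutions split into antipodal pairs $\{x,-x\}$ with $x\neq -x$, and each pair contributes $(1+(-1)^n)\,{\rm sgn}\det DP_m(x)\in\{0,\pm 2\}$, an even number; summing shows $d(P)$ is even. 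Finally, for any $y\not\in P(N_P)$, writing $d(P)=N_+-N_-$ with $N_\pm$ the numbers of preimages of positive and negative sign, one gets $\#P^{-1}(y)=N_++N_-\equiv d(P)\equiv 0\pmod 2$, so the number of solutions is even. The only delicate point here is legitimately evaluating the degree at a nonzero regular value and ensuring all preimages are captured in $B_r(0)$, both of which are secured by coercivity and the density of regular values.
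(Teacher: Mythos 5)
Your proof is correct and follows essentially the same route as the paper: part (i) is bookkeeping within the degree framework of the preceding subsection, part (ii) rests on Borsuk's theorem applied to the odd leading form $P_m$, and part (iii) exploits the antipodal symmetry $P_m(-z)=P_m(z)$ of the even-degree leading form. The only cosmetic difference is in (iii), where you compute the signs via $\det DP_m(-z)=(-1)^n\det DP_m(z)$ explicitly, whereas the paper just notes that preimages of a value $y\not\in P_m(N_{P_m})$ pair up under $z\mapsto -z$ and uses the resulting mod-$2$ congruence between the degree and the number of preimages.
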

\begin{proof}Part (i) is immediate from the above. To prove part (ii), note that $d(P)$ is odd by Borsuk's theorem for $P_m$ (see \cite{Deimling}, Ch.~I, Thm.~4.1). For part (iii) note that whenever $y\not\in P_m(N_{P_m})$ then $P_m(z)=y$ if and only if $P_m(-z)=y$.
\end{proof}
We turn to the complex setting. A polynomial map
\[
S=S_0+\cdots + S_m:\,\mathbb C^n\to \mathbb C^n, \quad S_m\not=0,
\] 
may be regarded as a polynomial map $ P=P_{\mathbb R}(S)$ from $\mathbb R^{2n}$ to $\mathbb R^{2n}$, of degree $m$. Clearly $S_m(w)\not=0$ for all $w\in \mathbb C^n\setminus\{0\}$ is equivalent to $P_m(z)\not=0$ for all $z\in\mathbb R^{2n}\setminus\{0\}$. It is known that $\det DP(z)\geq 0$ for all $z$ (see a proof in the Appendix).
\begin{proposition}\label{algtop} Let $S: \mathbb C^n\to\mathbb C^n$ be a polynomial map of degree $m$ such that $S_m(w)=0$ only when $w=0$, and $P=P_{\mathbb R}(S)$. Then $d( P)=m^n$, hence the topological degree of $P$ and the number of zeros of $S$ according to Bezout's theorem are equal.

\end{proposition}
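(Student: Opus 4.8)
The plan is to reduce everything to the leading homogeneous part and then to count preimages of a generic value, exploiting the nonnegativity of the realified Jacobian. By item 7 of the list preceding Lemma \ref{toplem}, the global degree depends only on the top-degree term, and the top-degree part of $P=P_{\mathbb R}(S)$ is precisely $P_m=P_{\mathbb R}(S_m)$. Thus $d(P)=d(P_m)$, and it suffices to show $d(P_m)=m^n$. Recall that $d(P_m)=d(P_m,B_r(0),0)$ for all sufficiently large $r$, and that $\Vert P_m(z)\Vert\geq\rho\Vert z\Vert^m$ on $\partial B_r(0)$ for some $\rho>0$; hence every $y$ with $\Vert y\Vert<\rho r^m$ lies in the same connected component of $\mathbb R^{2n}\setminus P_m(\partial B_r(0))$ as $0$, and by the homotopy invariance recorded in 5.(ii) one has $d(P_m,B_r(0),0)=d(P_m,B_r(0),y)$ for every such $y$. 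I would therefore evaluate the degree at a conveniently chosen regular value $y\neq 0$ rather than at $0$, noting that $0$ is never a regular value since $DS_m(0)=0$ for $m>1$.

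Next I would invoke the key fact stated just before the proposition: for the realification of a holomorphic map one has $\det DP_m(z)\geq 0$ everywhere, and more precisely $\det DP_m(z)=|\det DS_m(w)|^2$ when $z$ corresponds to $w\in\mathbb C^n$. Consequently, at any regular point of $P_m^{-1}(y)$ the sign of the Jacobian determinant is $+1$. So for $y\notin P_m(N_{P_m})$ the degree $d(P_m,B_r(0),y)$ equals the number of solutions $w\in\mathbb C^n$ of $S_m(w)=y$ lying in $B_r(0)$. For $y$ with $\Vert y\Vert<\rho r^m$ any such solution satisfies $\rho\Vert w\Vert^m\leq\Vert y\Vert<\rho r^m$, hence $\Vert w\Vert<r$; so all solutions are captured and the degree equals the total number of solutions of $S_m(w)=y$ in $\mathbb C^n$.

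Finally I would count these solutions by Bezout. The system $S_m(w)=y$ consists of $n$ equations of degree $m$ in the $n$ variables $w_1,\ldots,w_n$, whose leading homogeneous parts are exactly the components of $S_m$. Passing to the projective closure in $\mathbb P^n$, the zeros at infinity are the common projective zeros of these leading parts, i.e. the nonzero $w$ with $S_m(w)=0$; by hypothesis there are none. Hence the projectivized system has finitely many zeros, all of them affine, and Theorem \ref{bezout} yields exactly $m^n$ of them counted with multiplicity. A regular value $y$ exists because $P_m(N_{P_m})$ is contained in a proper algebraic subset of $\mathbb R^{2n}$ (item 1), and for such a $y$ all $m^n$ solutions are simple. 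Combining the three steps gives $d(P)=d(P_m)=d(P_m,B_r(0),y)=m^n$.

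The main obstacle I anticipate lies in the bookkeeping between the affine and projective counts: one must verify that the absence of zeros of $S_m$ at infinity genuinely forces all $m^n$ Bezout solutions to be affine (rather than merely bounding their number), and that the chosen $y$ is simultaneously a regular value of $P_m$ and satisfies $\Vert y\Vert<\rho r^m$ so that it avoids $P_m(\partial B_r(0))$ and confines all preimages to $B_r(0)$. Both are generic conditions, but reconciling them with the fixed large radius $r$ and with the $y$-independence of the degree is exactly where the estimate $\Vert P_m(z)\Vert\geq\rho\Vert z\Vert^m$ must be used carefully.
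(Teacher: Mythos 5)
Your proof is correct and follows essentially the same route as the paper's: reduce to the leading part $P_m$, use the nonnegativity of the realified Jacobian determinant to identify $d(P_m)$ with the number of preimages of a generic regular value, and count those preimages via Bezout's theorem. The paper's own proof is a much terser version of this same argument, and the details you supply (confining all preimages to $B_r(0)$ via the estimate $\Vert P_m(z)\Vert\geq\rho\Vert z\Vert^m$, and noting that the absence of zeros of $S_m$ at infinity forces exactly $m^n$ affine solutions of multiplicity one) are precisely the right ones to fill in its gaps.
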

\begin{proof} $Z:= P_m(N_{ P_m})$ is contained in a proper subvariety of $\mathbb R^{2m}$. For any $a\not\in Z$ the number of solutions of $P(z)=a$ is equal to $d(P_m)$ by the previous subsection, and equal to $m^n$ by Bezout (note that all multiplicities of zeros of $P_m(x)-y$ equal $1$ whenever $y\not\in Z$).
\end{proof}
\subsection{Application to eigenvectors.}
For $\mathbb K=\mathbb C$ the analytic methods give no improvement of Proposition \ref{bezcor} and Corollary \ref{realgrem}, but for the real case they do. The first result for $\mathbb K=\mathbb R$, as well as its proof, goes back to Kaplan and Yorke \cite{KaYo}.
\begin{proposition}\label{evendegree}Let $Q:\mathbb R^n\to \mathbb R^n$ be a homogeneous polynomial map of even degree. Then there exists $v\not=0$ such that $Q(v)=0$ or $c\not=0$ such that $Q(c)=c$.

\end{proposition}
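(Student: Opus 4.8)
The plan is to study the polynomial map $P := Q - \mathrm{id}$, whose leading homogeneous part is $P_m = Q$, and to compare its local behaviour at the origin with its behaviour at infinity by means of the topological degree of Subsection \ref{brouwer}. First I would dispose of the case in which $Q(v) = 0$ for some $v \neq 0$: then a nilpotent exists and there is nothing more to prove. So from here on I would assume $Q(z) \neq 0$ for all $z \neq 0$. This is exactly the hypothesis of Lemma \ref{toplem}, applied both to $Q$ itself and to $P$ (which has the same nonvanishing leading part); in particular the global degree $d(P) = d(Q)$ is well defined, and since $\deg Q$ is even, Lemma \ref{toplem}(iii) shows that $d(P) = d(Q)$ is an \emph{even} integer. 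The remaining goal is to find a nonzero zero of $P$, that is, an idempotent $c$ with $Q(c) = c$.

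Next I would compute the local index of $P$ at the origin. Near $0$ the degree-$m$ part is negligible against $-\mathrm{id}$, so I would use the linear homotopy
\[
H_t := (1-t)(-\mathrm{id}) + tP = tQ - \mathrm{id}, \qquad t \in [0,1],
\]
which is of the type permitted by item 5.(i). On a sphere $\partial B_\epsilon(0)$ the equation $H_t(x) = 0$ reads $x = t\,Q(x)$; using $\|Q(x)\| \le C\|x\|^m$ with $m \ge 2$, this would force $\epsilon \le C\epsilon^m$, impossible once $\epsilon$ is small. Hence $0 \notin H_t(\partial B_\epsilon(0))$ for all $t$, and homotopy invariance yields $d(P, B_\epsilon(0), 0) = d(-\mathrm{id}, B_\epsilon(0), 0) = (-1)^n$, an \emph{odd} number.

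To finish I would argue by contradiction, assuming $P$ has no nonzero zero, i.e. $P^{-1}(0) = \{0\}$. Since $\|P_m(z)\| \ge \rho\|z\|^m$ (item 6), the map $P$ grows at infinity, so for $R$ large all zeros of $P$ lie inside $B_\epsilon(0)$ and $0 \notin P(\partial B_R(0))$. The excision property of the Brouwer degree (Deimling, Ch.~I) together with item 6 then gives
\[
(-1)^n = d(P, B_\epsilon(0), 0) = d(P, B_R(0), 0) = d(P_m) = d(Q),
\]
which equates the odd integer $(-1)^n$ with the even integer $d(Q)$ --- a contradiction. Therefore $P$ must possess a zero $c \neq 0$, i.e. $Q(c) = c$, completing the proof.

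I expect the main difficulty to be purely in the degree bookkeeping rather than in any conceptual step: one must check that each homotopy keeps the value $0$ off the relevant sphere --- the linear one near the origin (where $m \ge 2$ is exactly what makes the estimate work) and the scaling homotopy underlying item 6 at infinity --- and one must appeal to excision/additivity of the degree, a standard property that is used here slightly beyond what the excerpt states explicitly.
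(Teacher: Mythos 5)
Your proof is correct, but it follows a genuinely different route from the paper's. The paper runs a single homotopy argument on $Q$ itself: assuming neither a nilpotent nor an idempotent exists, it considers $H(t,x)=(1-t)Q(x)+tx$ and observes that a zero of $H(t,\cdot)$ with $x\not=0$ would make $x$ an eigenvector of $Q$ with eigenvalue $-t/(1-t)\leq 0$, which for even $m$ can be rescaled to a nilpotent or an idempotent (since $m-1$ is odd, any nonzero real eigenvalue can be normalized to $1$) --- contradiction; hence $d(Q)=d(\mathrm{id},B_r(0),0)=1$, which contradicts the evenness of $d(Q)$ from Lemma \ref{toplem}(iii). You instead work with $P=Q-\mathrm{id}$, whose nontrivial zeros are exactly the idempotents, compute the local index $(-1)^n$ at the origin via the homotopy $tQ-\mathrm{id}$ (justified by a pure norm estimate), identify the global degree with $d(Q)$ via item 6, and equate the two by excision under the no-idempotent hypothesis, getting the same parity contradiction. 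The trade-off: your argument never needs the eigenvalue-rescaling observation --- the hypothesis ``no idempotent'' enters verbatim as ``$P$ has no nonzero zero'' --- but it invokes excision, which lies outside the facts listed in Subsection \ref{brouwer} (you correctly flag this; it is indeed standard and in Deimling, Ch.~I). The paper's proof stays entirely within the listed toolkit and is shorter, at the cost of that slightly hidden normalization step. One streamlining remark on your version: under your contradiction hypothesis you have $P^{-1}(0)=\{0\}$ and $DP(0)=-I_n$ (since $DQ(0)=0$ for $m\geq 2$), so $0\not\in P(N_P)\cup P(\partial B_R(0))$ for every $R>0$, and the defining formula in item 2 gives $d(P,B_R(0),0)=\mathrm{sgn}\det DP(0)=(-1)^n$ directly for all $R$; this makes both your local homotopy and the appeal to excision unnecessary.
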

\begin{proof} Assume the contrary and define
\[
H(t,x):=(1-t)Q(x)+tx,\quad 0\leq t\leq 1.
\]
Then $H(t,x)\not=0$ for all $t$ and all $x\not=0$, since otherwise $Q(z)=\frac{t}{1-t}z$ for some $t\in[0,1)$; contradiction. Therefore 
\[
1= d({\rm id},\,B_r(0),0)=d(Q,B_r(0),0)
\]
for all $r>0$ by \#5 in the previous subsection. On the other hand, $d(Q)$ is even by Lemma \ref{toplem}; a contradiction.
\end{proof}
This result holds even when the number of one dimensional complex eigenspaces of $Q$ is infinite, as does the next result, which is a direct application of the ``hedgehog theorem'' (see Deimling \cite{Deimling}, Ch.~I, Thm.~3.4).
\begin{proposition}Let $n$ be odd and $Q:\,\mathbb R^n\to\mathbb R^n$ homogeneous. Then there exists $c\not=0$ such that $Q(c)\in\mathbb R c$.

\end{proposition}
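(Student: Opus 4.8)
The plan is to obtain the eigenvector from a single application of the hedgehog theorem, after clearing away one degenerate case. First I would observe that if $Q$ vanishes at some $c\neq 0$, then trivially $Q(c)=0\cdot c\in\mathbb R c$ and we are finished; so I may assume $Q(z)\neq 0$ for every $z\neq 0$. In particular $Q$ restricts to a continuous map of the unit sphere $S^{n-1}=\partial B_1(0)$ into $\mathbb R^n\setminus\{0\}$.

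With this normalization the conclusion is immediate. Applying the hedgehog theorem (Deimling \cite{Deimling}, Ch.~I, Thm.~3.4) on $\Omega=B_1(0)$ to the map $Q$, and using that $n$ is odd together with $Q\neq 0$ on $\partial\Omega$, one obtains a boundary point $c\in S^{n-1}$ and a scalar $\lambda\neq 0$ with $Q(c)=\lambda c$. This is precisely the assertion that there exists $c\neq 0$ with $Q(c)\in\mathbb R c$.

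To make the role of the parity hypothesis transparent, I would also point out the equivalent tangent-vector-field reading: the assignment $c\mapsto Q(c)-\left<Q(c),c\right>c$ defines a continuous tangent vector field on $S^{n-1}$, and since $n$ is odd the sphere $S^{n-1}$ has even dimension, so this field must vanish at some $c$; there $Q(c)=\left<Q(c),c\right>c\in\mathbb R c$. I do not expect any genuine obstacle in this argument: the only points demanding care are verifying the hypotheses of the cited theorem (nonvanishing of $Q$ on the sphere, oddness of $n$) and noting that the excluded case $Q(c)=0$ is itself already covered by the conclusion. The entire weight of the proof rests on the topological input; homogeneity of $Q$ is used only to guarantee continuity and to ensure that the resulting line $\mathbb R c$ is genuinely a one-dimensional invariant subspace, since then $Q(\alpha c)=\alpha^m\lambda\,c\in\mathbb R c$ for all $\alpha$.
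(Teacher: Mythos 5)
Your proof is correct and is essentially the paper's own argument: the paper offers no written proof beyond remarking that the proposition ``is a direct application of the hedgehog theorem (Deimling, Ch.~I, Thm.~3.4),'' and your write-up simply makes that application explicit, including the trivial disposal of the case where $Q$ has a nontrivial zero. Your alternative reading via the vanishing of the tangent field $c\mapsto Q(c)-\left<Q(c),c\right>c$ on the even-dimensional sphere $S^{n-1}$ is a correct and slightly cleaner packaging of the same topological fact, and it also matches the vector field $Q^*$ the paper uses later in Proposition \ref{gradientprop}.
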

\subsection{Gradient fields} We turn to real gradient fields, recalling the notions from subsection \ref{gradientstuff}. For these, a simple but crucial fact guarantees the existence of real eigenvectors (compare a standard existence proof for real eigenvectors of real symmetric matrices).
\begin{proposition}\label{lagrange}
Let $\mathbb K=\mathbb R$ and let $Q$ be homogeneous of degree $m$ such that $Q(x)$ is symmetric for every $x$. Then there exist $0\not=v\in \mathbb R^n$ and $\lambda\in\mathbb R$ such that $Q(v)=\lambda v$.
\end{proposition}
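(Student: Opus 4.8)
The plan is to mimic the classical variational proof of existence of eigenvectors for real symmetric matrices: maximize a suitable scalar function on the unit sphere and read off an eigenvector from the first-order optimality condition. The hypothesis that $DQ(x)$ is symmetric for every $x$ is exactly what allows us to regard $Q$ as a gradient field, which is what makes this variational approach available.

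First I would invoke the correspondence established in subsection \ref{gradientstuff}. Since $DQ(x)$ is symmetric for all $x$, the homogeneous polynomial
\[
p(x):=\frac1{m+1}\langle Q(x),x\rangle
\]
of degree $m+1$ satisfies $Dp(x)y=\langle Q(x),y\rangle$ for all $x,y\in\mathbb R^n$, that is, $\nabla p(x)=Q(x)$. Thus the eigenvectors of $Q$ are precisely the points where $\nabla p$ is a scalar multiple of the position vector, and it suffices to produce one such point.

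Next I would restrict $p$ to the unit sphere $S^{n-1}=\{x\in\mathbb R^n:\langle x,x\rangle=1\}$. As $S^{n-1}$ is compact and $p$ is continuous, $p$ attains a maximum at some $v\in S^{n-1}$; in particular $v\neq 0$. At such a constrained maximum the Lagrange multiplier rule applies: the constraint function $g(x)=\langle x,x\rangle$ has nonvanishing gradient $\nabla g(x)=2x$ on $S^{n-1}$, so the constraint qualification holds, and there is a $\mu\in\mathbb R$ with $\nabla p(v)=\mu\,\nabla g(v)=2\mu\,v$. Setting $\lambda:=2\mu$ then gives $Q(v)=\nabla p(v)=\lambda v$, as required.

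There is essentially no hard analytic obstacle here; the one point deserving care is the justification of the first-order condition, and rather than quote the Lagrange rule as a black box I would argue directly. For any smooth curve $t\mapsto\gamma(t)$ in $S^{n-1}$ with $\gamma(0)=v$, the function $t\mapsto p(\gamma(t))$ has a maximum at $t=0$, so $0=\tfrac{d}{dt}p(\gamma(t))\big|_{t=0}=\langle\nabla p(v),\gamma'(0)\rangle$. Since $\gamma'(0)$ ranges over the entire tangent space $T_vS^{n-1}=v^{\perp}$, we conclude that $\nabla p(v)\perp v^{\perp}$, hence $\nabla p(v)\in\mathbb R v$, which is exactly the relation $Q(v)=\lambda v$. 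The genuinely essential step — and the reason the symmetry hypothesis cannot be dropped — is the identification of $Q$ with the gradient $\nabla p$; once that is in place, compactness of the sphere does all of the remaining work.
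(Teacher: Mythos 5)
Your proposal is correct and follows essentially the same route as the paper: both define the scalar polynomial $\langle Q(x),x\rangle$ (up to the harmless factor $1/(m+1)$), maximize it on the compact sphere, and extract the eigenvector relation from the Lagrange multiplier condition. Your additional direct verification of the first-order condition via curves in $S^{n-1}$ is a fine elaboration of what the paper cites as the Lagrange multiplier theorem, but it does not change the substance of the argument.
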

\begin{proof} Define $q(x):=\left<Q(x),x\right>$ and recall $Dq(x)y=(m+1)\left<Q(x),y\right>$ from \eqref{graddef} and \eqref{symmder}. On the compact sphere $\mathbb S^{(n-1}$ the function $q$ attains a maximum, say at $v$. By the Lagrange multiplier theorem one has $Q(v)=\mu v$ for some $\mu$.
\end{proof}
More detailed information about eigenvectors of gradients follows from the Poincar\'e-Hopf Theorem (see Milnor \cite{Milnor}, \S6). We state its specialization for the sphere. 
\begin{theorem}\label{poinhopf}
For a smooth vector field $ F$ on the $(n-1)$-sphere $\mathbb S^{n-1}$ the sum of the indices of its critical points is equal to the Euler characteristic of $\mathbb S^{n-1}$, thus is equal to $2$ for odd $n$ and is equal to $0$ for even $n$.
\end{theorem}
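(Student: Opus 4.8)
The plan is to establish the two ingredients behind the Poincar\'e--Hopf theorem for $\mathbb S^{n-1}$: first, that the sum of indices is the same for every smooth tangent field with isolated zeros, and second, an explicit computation for one convenient field, from which the value $\chi(\mathbb S^{n-1})=1+(-1)^{n-1}$ is read off at the end. Recall that the index of $F$ at an isolated zero $p$ is, in any local chart sending $p$ to the origin, the Brouwer degree of the normalized map $x\mapsto F(x)/\Vert F(x)\Vert$ on a small sphere around the origin; thus each local index is itself a degree in the sense of Subsection \ref{brouwer}, and the whole apparatus of that subsection becomes available.

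The crux is the invariance of the index sum. Following Milnor \cite{Milnor}, \S6, I would embed $\mathbb S^{n-1}$ in $\mathbb R^n$, pass to the tubular shell $N_\epsilon=\{z;\,|\,\Vert z\Vert-1\,|<\epsilon\}$, and extend the tangent field by adding the normal component, setting $w(z):=F(z/\Vert z\Vert)+\bigl(z-z/\Vert z\Vert\bigr)$. This $w$ has exactly the zeros of $F$ with the same indices, and on both components of $\partial N_\epsilon$ its normal part points outward. The key lemma for a compact region carrying an outward-pointing boundary field is that the interior index sum equals the Brouwer degree of $w/\Vert w\Vert$ on $\partial N_\epsilon$. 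Since the tangential part $F(z/\Vert z\Vert)$ is orthogonal to the nonvanishing normal part, the homotopy $w_t:=\bigl(z-z/\Vert z\Vert\bigr)+t\,F(z/\Vert z\Vert)$ stays zero-free; hence this boundary degree equals the degree of the outward Gauss map of $\partial N_\epsilon$ and is manifestly independent of $F$.

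It then suffices to evaluate the index sum for a single field. I would take the tangential gradient of the height function $h(x):=x_n$ on $\mathbb S^{n-1}$. Its only zeros are the poles $\pm e_n$, both nondegenerate: the north pole is a maximum and the south pole a minimum. At a nondegenerate zero the index equals the sign of the determinant of the linearization, i.e.\ $(-1)^\lambda$ with $\lambda$ the number of negative eigenvalues of the Hessian; the maximum contributes $(-1)^{n-1}$ and the minimum contributes $(-1)^0=1$. Hence every tangent field with isolated zeros has index sum $1+(-1)^{n-1}$, which is $2$ for odd $n$ and $0$ for even $n$, in agreement with $\chi(\mathbb S^{n-1})=1+(-1)^{n-1}$.

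The main obstacle is the invariance step, and within it the region lemma tying the interior index sum to the boundary degree, together with the routine but necessary check that the local index does not depend on the chosen chart---this uses that the transition maps of an oriented atlas are orientation preserving, so the local degrees are unaffected. Once that lemma is in hand, homotopy invariance of the Brouwer degree (item~5 of Subsection \ref{brouwer}) supplies the $F$-independence, and the explicit two-pole computation supplies the value.
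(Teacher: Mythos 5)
The paper does not prove this theorem at all: it is quoted as a known result with a pointer to Milnor \cite{Milnor}, \S 6, which is exactly the argument you reproduce (tubular-shell extension $w$, Hopf's lemma identifying the interior index sum with the boundary degree, homotopy to the Gauss map to get independence of $F$, and the two-pole height-function field to evaluate the constant as $1+(-1)^{n-1}$). Your sketch is correct and is essentially the proof the paper delegates to its reference, so there is nothing to compare beyond noting that the two acknowledged gaps in your outline (chart-independence of the index and the region lemma) are precisely Lemmas 1 and 3 of Milnor's \S 6.
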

Some notions and facts should be supplemented here.
\begin{itemize}
\item A vector field on $\mathbb S^{n-1}$ is by definition tangent to $\mathbb S^{n-1}$. It may be defined locally via a parameterization of the sphere, or by restricting a suitable vector field that is defined in some neighborhood of $\mathbb S^{n-1}$.
\item The index of a critical point $a$ is generally defined in Milnor \cite{Milnor}, \S6 (via the Brouwer degree); when $a$ is nondegenerate (i.e.,  the derivative at $a$ is invertible) it is equal to the sign of its Jacobian determinant (Milnor \cite{Milnor} \S6, Lemma 4).
\end{itemize}
We now collect some pertinent facts for gradient fields. Recall that a real symmetric matrix is diagonalizable over $\mathbb R$.
\begin{proposition}\label{gradientprop}
 Let $Q:\,\mathbb R^n\to\mathbb R^n$ be homogeneous of degree $m$ such that $DQ(x)$ is symmetric for every $x$. Then:
\begin{enumerate}[(a)]
\item 
\[
Q^*:\,\mathbb R^n\setminus\{0\}\to\mathbb R^n,\quad Q^*(x):=Q(x)-\frac{\left<Q(x),\,x\right>}{\left<x,\,x\right>}x
\]
defines a vector field on $\mathbb S^{n-1}$. A point $c\in\mathbb S^{n-1}$ is stationary for $Q^*$ if and only if $Q(c)\in\mathbb R c$.
\item Let $c\in\mathbb S^{n-1}$ such that $Q(c)=\alpha c$ (hence $c$ is an eigenvector of  $DQ(c)$ with eigenvalue $m\alpha $), and denote the remaining eigenvalues of $DQ(c)$ by $\beta_2,\ldots,\beta_n$ (counted with multiplicities). Then the derivative of $Q^*|_{\mathbb S^{n-1}}$ at $c$ has eigenvalues $\beta_2-\alpha,\ldots,\beta_n-\alpha$, and the index of the stationary point is equal to the sign of $\prod(\beta_k-\alpha)$, provided this product is nonzero.
\item Let $m$ be odd. For a nondegenerate stationary point $c$ of $Q^*$, $-c$ is also a nondegenerate stationary point, and the indices of both points are equal. The same holds in case that $m$ is even and $n$ is odd.
\item Let $m$ and $n$ be even. For a nondegenerate stationary point $c$ of $Q^*$ one has that $-c$ is also a nondegenerate stationary point, and the indices of both points are different.
\end{enumerate}
\end{proposition}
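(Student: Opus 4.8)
The plan is to treat the four parts in order, with part (b) as the computational heart and parts (c), (d) following by homogeneity together with careful sign bookkeeping. Throughout I normalize $\langle c,c\rangle=1$.

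For part (a), I would first confirm that $Q^*$ is genuinely tangent to the sphere by the one-line computation $\langle Q^*(x),x\rangle=\langle Q(x),x\rangle-\frac{\langle Q(x),x\rangle}{\langle x,x\rangle}\langle x,x\rangle=0$, so subtracting the $x$-component of $Q(x)$ leaves a vector orthogonal to $x$. For the characterization of stationary points, $Q^*(c)=0$ forces $Q(c)=\frac{\langle Q(c),c\rangle}{\langle c,c\rangle}\,c\in\mathbb Rc$, while conversely $Q(c)=\lambda c$ gives $\frac{\langle Q(c),c\rangle}{\langle c,c\rangle}=\lambda$ and hence $Q^*(c)=\lambda c-\lambda c=0$.

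For part (b), the first step is Euler's identity $DQ(x)x=mQ(x)$, which at $x=c$ yields $DQ(c)c=m\alpha c$, so $c$ is an eigenvector of $DQ(c)$ for the eigenvalue $m\alpha$. Since $DQ(c)$ is symmetric, its invariant complement $c^\perp=T_c\mathbb S^{n-1}$ carries a symmetric restriction with eigenvalues $\beta_2,\ldots,\beta_n$. The key step is the linearization of the restricted field. Writing $Q^*(x)=Q(x)-g(x)x$ with $g(x)=\langle Q(x),x\rangle/\langle x,x\rangle$ and noting $g(c)=\alpha$, differentiation gives for $w\in c^\perp$
\[
DQ^*(c)w=DQ(c)w-(Dg(c)w)\,c-\alpha w .
\]
The linearization of $Q^*|_{\mathbb S^{n-1}}$ at the stationary point $c$ is the orthogonal projection of $DQ^*(c)$ onto $c^\perp$; the term $(Dg(c)w)\,c$ lies in $\mathbb Rc$ and is annihilated by this projection, while $DQ(c)$ preserves $c^\perp$ so that $DQ(c)w$ is already tangential. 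Hence the linearization acts as $DQ(c)-\alpha I$ on $c^\perp$, with eigenvalues $\beta_k-\alpha$, and the index of a nondegenerate stationary point, being the sign of the Jacobian determinant, equals ${\rm sgn}\prod_{k=2}^n(\beta_k-\alpha)$.

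For parts (c) and (d), I would exploit $Q(-c)=(-1)^mQ(c)$ and $DQ(-c)=(-1)^{m-1}DQ(c)$. When $m$ is odd these read $Q(-c)=\alpha(-c)$ and $DQ(-c)=DQ(c)$, so the eigenvalue $\alpha$ and the $\beta_k$ are unchanged and both indices coincide. When $m$ is even the eigenvalue at $-c$ becomes $-\alpha$ and $DQ(-c)=-DQ(c)$, so the transverse eigenvalues become $-\beta_k$ and each factor $\beta_k-\alpha$ is replaced by $-(\beta_k-\alpha)$; the product of the $n-1$ factors thus acquires $(-1)^{n-1}$. For $n$ odd this factor is $+1$ and the indices agree, completing (c); for $n$ even it is $-1$ and the indices are opposite, giving (d). The main obstacle I anticipate is making precise in part (b) that the linearization of the restricted field at a zero is exactly the tangential projection of the ambient derivative, i.e.\ that the normal-directional contribution drops out because $Q^*(c)=0$; once this is settled, the parity arguments of (c) and (d) are routine.
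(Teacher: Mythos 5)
Your proposal is correct and follows essentially the same route as the paper: verify tangency via $\left<Q^*(x),x\right>=0$, differentiate $Q^*(x)=Q(x)-g(x)x$ at the stationary point to see that the linearization acts as $DQ(c)-\alpha I$ transversally to $c$ (the normal contribution being irrelevant), and then obtain (c) and (d) from the parity of $m$ and $n$ acting on the real factors $\beta_k-\alpha$ --- a bookkeeping step the paper compresses into ``follow by inspection.'' Your explicit justification that the intrinsic linearization is the tangential projection of $DQ^*(c)$ (and that $DQ(c)$ preserves $c^\perp$ by symmetry) is a slightly more careful rendering of the same computation, not a different method.
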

\begin{proof}
Since $\left<Q^*(x),x\right>=0$ for all $x\not=0$, $Q^*$ actually defines a vector field on the sphere. A straightforward computation yields
\[
DQ^*(x)y=DQ(x)y-\frac{\left<Q(x),\,x\right>}{\left<x,\,x\right>}\cdot y+\left(\cdots\right)\cdot x
\]
hence $\left<c,c\right>=1$ and $Q(c)=\alpha c$, $DQ(c)c=m\alpha \,c$ imply
\[
DQ^*(c)y=DQ(c)y-\alpha y +\left(\cdots\right)\cdot c.
\]
Therefore 
\[
DQ(c)v=\beta v\quad \Rightarrow \quad DQ^*(c)v=(\beta-\alpha)v+\left(\cdots\right)\,c,
\]
which shows the assertion about the eigenvalues. Since the determinant is the product of the eigenvalues, and all eigenvalues are real, the remaining assertions follow by inspection.
\end{proof}
\begin{remark} From these arguments one also sees: The multiplicity of the zero $c$  of $x\mapsto Q(x)-\alpha x$ (with derivative $DQ(c)-\alpha I$) is equal to one if and only if the corresponding stationary point of $Q^*$ is nondegenerate.
\end{remark}

\section{Applications to polynomial differential equations}
Obviously the results of subsection \ref{brouwer} may be employed to show the existence of stationary points for polynomial differential equations, and to obtain bounds on their number. There are two applications which refer specifically to eigenvectors, and we discuss these in some detail.
\subsection{Particular solutions of homogeneous polynomial systems.} Here we follow R\"ohrl \cite{RohrlId}. Given a homogeneous polynomial map $Q:\, \mathbb K^n\to\mathbb K^n$ of degree $m>1$, consider the differential equation $\dot x=Q(x)$. Let $0\not=c\in\mathbb K^m$ such that $Q(c)=\alpha c$. If $\alpha=0$ then every point on the line $\mathbb K c$ is stationary. For $\alpha\not=0$ make the ansatz $v(t)=\phi(t)\cdot c$ for a solution to obtain the necessary and sufficient condition
\[
\dot \phi(t)\cdot c=Q(\phi(t)\cdot c)=\alpha\phi(t)^m\cdot c,
\]
which yields the separable one dimensional equation $\dot y=\alpha y^m$; this admits solution by elementary functions.\\
Thus we have verified the existence and the explicit form of certain particular solutions. But in contrast to the linear case one should not expect to obtain further solutions as explicit ``combinations'' (of whatever type) of those special ones, and neither should one expect to obtain real solutions from solutions related to non-real eigenspaces. Moreover, no higher dimensional invariant subspaces exist in general for homogeneous $Q$ of degree $>1$; see \cite{WADE}, Ch.~3-4 for a discussion.
\subsection{Stationary points at infinity} We consider a polynomial differential equation
\begin{equation}\label{polyde}
\dot x=P(x)=P_0(x)+\cdots+P_m(x), \quad m>1,\, P_m\not=0\quad \text{  on  }\mathbb R^n.
\end{equation}
One may associate to this equation a system on the $n$-sphere $\mathbb S^n\subseteq\mathbb R^{n+1}$ (the {\em Poincar\'e hypersphere}) with a $\mathbb Z_2$ symmetry (or a system on the projective space $\mathbb P^n$, if one prefers). We describe the construction, which is mostly used in dimension $n=2$ but generally valid, following Perko \cite{Perko}, Section 3.10 for geometric motivation, and following \cite{RW}, Section 2 for a general (somewhat abstract) approach. 
\begin{itemize}
\item The construction, geometrically (see Perko \cite{Perko} for illustrations in dimension two): Consider $\mathbb S^n\subseteq\mathbb R^{n+1}$ and identify the phase space $\mathbb R^n$ with the tangent hyperplane $\mathbb H$ to the sphere at the north pole $(0,\ldots,0,1)^{\rm tr}$. The central projection from $0$ then induces a bijection from the upper hemisphere $\{x\in\mathbb S^n; \, x_{n+1}>0\}$ to $\mathbb H$, and the equator $\{x\in\mathbb S^n; \, x_{n+1}=0\}$ may be identified with ``points at infinity'' of $\mathbb H$. The image of the  polynomial vector field on $\mathbb H$ can be extended to a rational vector field on $\mathbb S^n$ and there remains, up to orbital equivalence, a polynomial vector field.
\item Antipodal pairs of stationary points of this vector field on the equator (``at infinity'') stand in 1-1 correspondence with one dimensional real eigenspaces of the highest degree term $P_m$. If $P_m(c)=\alpha c$ then $m\alpha$ is an eigenvalue of $DP_m(c)$ with eigenvector $c$. Let $\beta_2,\ldots, \beta_n$ be the remaining (possibly complex) eigenvalues of $DP(c)$. Then the linearization of the vector field on the sphere at the stationary point has eigenvalues $-\alpha$ and $\beta_2-\alpha,\ldots,\beta_n-\alpha$. The sum of the generalized eigenspaces for the latter is just the tangent space to the equator, and there exists an eigenvector for $-\alpha$ which is transversal to this tangent space. (See \cite{RW}, Lemma 2.1 for details and a proof. Note that the eigenvalues $\beta_k-\alpha$ also appear in Proposition \ref{gradientprop}; this is not a coincidence.)
\end{itemize}
From this approach we directly obtain criteria for unbounded solutions to polynomial differential systems. The statement and a proof of part (b) for quadratic systems is due to Kaplan and Yorke; for part (a) see \cite{RW}, Prop.~2.2. 
\begin{proposition} Let the ordinary differential equation \eqref{polyde} be given.
\begin{enumerate}[(a)]
\item If there exists a nonzero $c\in\mathbb R^n$ and an $\alpha >0$ such that $P_m(c)=\alpha c$ then the system has an unbounded solution.
\item If $m$ is even then the system has an unbounded solution or every one dimensional eigenspace of $P_m$ is spanned by a nilpotent.
\end{enumerate}
\end{proposition}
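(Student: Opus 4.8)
The plan is to carry out the entire argument on the Poincar\'e hypersphere, using the dictionary recalled in the second bulleted item preceding the statement: one dimensional real eigenspaces of $P_m$ correspond to antipodal pairs of stationary points of the compactified field on the equator, and at such a point the linearization has the eigenvalue $-\alpha$ along a direction \emph{transversal} to the equator, while the remaining eigenvalues $\beta_2-\alpha,\dots,\beta_n-\alpha$ are tangent to it. The strategy is to read off the sign of the single transversal eigenvalue and feed it into the stable manifold theorem.

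For part (a) I would argue as follows. Let $c$ with $P_m(c)=\alpha c$, $\alpha>0$, and let $p$ be one of the two equatorial stationary points attached to $\mathbb R c$. Since $\alpha>0$, the transversal eigenvalue $-\alpha$ is strictly negative, so the transversal eigendirection lies in the stable subspace of the linearization at $p$, regardless of the signs of the tangential eigenvalues $\beta_k-\alpha$. Hence the local stable manifold $W^{s}_{\mathrm{loc}}(p)$, being tangent to a subspace that contains the transversal direction, is itself transversal to the equator and therefore meets the open upper hemisphere, i.e.\ the image of the finite phase space $\mathbb R^n$ under central projection. Any orbit through such an interior point converges to $p$ in forward time and thus leaves every compact subset of $\mathbb R^n$; pulled back to \eqref{polyde}, it is an unbounded solution. (If some $\beta_k-\alpha$ vanish, the stable manifold attached to the genuinely negative eigenvalue $-\alpha$ still exists and is transversal to the equator, so the conclusion is unaffected.)

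The one point demanding care, and the step I expect to be the main obstacle, is the passage from the rational field on $\mathbb S^n$ to the orbitally equivalent polynomial field: this reparametrizes time by a factor that is strictly positive on the open upper hemisphere and vanishes on the equator. I would need to check that this preserves the forward time orientation of interior orbits, so that $W^{s}(p)$ really corresponds to genuinely forward-unbounded solutions of the original system rather than to backward-escaping ones; the detailed linearization and construction I would simply quote from \cite{RW}, Lemma~2.1, and from the construction described above.

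Part (b) then reduces to part (a) by a parity argument exploiting that $m$ is even. Suppose some one dimensional eigenspace $\mathbb R c$ of $P_m$ is not spanned by a nilpotent, i.e.\ $P_m(c)=\alpha c$ with $\alpha\neq0$. If $\alpha>0$, part (a) applies directly. If $\alpha<0$, then homogeneity of even degree gives $P_m(-c)=(-1)^m P_m(c)=\alpha c=(-\alpha)(-c)$, so $-c$ spans the same line and is an eigenvector with eigenvalue $-\alpha>0$; applying part (a) to $-c$ again produces an unbounded solution. Contrapositively, if \eqref{polyde} has no unbounded solution then every one dimensional eigenspace of $P_m$ must be spanned by a nilpotent, which is precisely the asserted dichotomy.
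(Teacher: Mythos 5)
Your part (a) is essentially the paper's own proof: the paper's sketch consists precisely of passing to the Poincar\'e hypersphere, taking the stationary point at infinity corresponding to the half-line $\mathbb R_+c$, noting that the transversal eigenvalue $-\alpha$ is negative, and concluding that the stable manifold of this point meets the open upper hemisphere; you supply the supporting details (transversality of the stable manifold to the equator, the non-hyperbolic case, positivity of the time rescaling on the open upper hemisphere). One point of precision: you must take the stationary point attached to the \emph{positive} half-line $\mathbb R_+c$, not ``one of the two'' points of the antipodal pair. For even $m$ one has $P_m(-c)=(-1)^mP_m(c)=(-\alpha)(-c)$, so the antipodal point corresponds to the eigenvalue $-\alpha$ of $P_m$ and carries transversal eigenvalue $+\alpha>0$; your claim that the linearization there has eigenvalue $-\alpha$ fails at that point. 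Since you only need one point of the pair, this is a one-word repair, not a gap.

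For part (b) you take a genuinely different route from the paper. The paper's sketch says to use Proposition~\ref{evendegree}, the degree-theoretic result that a homogeneous map of even degree admits a nilpotent or an idempotent; combined with part (a) this most directly yields the Kaplan--Yorke dichotomy ``unbounded solution, or $P_m$ has a nontrivial zero.'' Your argument instead uses only the parity of $m$: if $P_m(c)=\alpha c$ with $\alpha\neq 0$, then $P_m(-c)=(-\alpha)(-c)$, so the eigenvalue on the line $\mathbb R c$ can always be taken positive, and part (a) applies. This is exactly the normalization $\lambda\in\{0,1\}$ for even $m$ recorded in the paper's subsection on eigenvectors, and it proves the statement as literally written (every eigenspace not spanned by a nilpotent forces an unbounded solution) with no degree theory at all; indeed, to obtain the stated ``every one dimensional eigenspace'' form from the paper's sketch one needs precisely this normalization step, so your version is the tighter fit. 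What your route does not give, and what Proposition~\ref{evendegree} adds, is an existence assertion: when no unbounded solution exists, the second alternative is not vacuous, since $P_m$ then actually possesses a nontrivial zero. Both arguments are correct; yours is more elementary, the paper's recovers the original Kaplan--Yorke formulation.
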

\begin{proof}[Sketch of proof]
For the first part consider the stationary point at infinity which corresponds to the positive half-line $\mathbb R_+c$. Due to $-\alpha<0$, the stable manifold of this stationary point has nontrivial intersection with the upper hemisphere. For part (b) use Proposition \ref{evendegree}.
\end{proof}
This straightforward result uses only part of the available information. More detailed qualitative studies are possible, in particular in dimension two. We refer to Perko \cite{Perko}, Sections 3.10-3.11, Dumortier et al.~\cite{DLA}, and also \cite{RW}, Sections 3 and 4. 
\section{Extrema of homogeneous cubic polynomials}
\subsection{Background} Motivated by the structure theory of  liquid crystals, Gaeta and Virga \cite{GaVi} discussed harmonic homogeneous cubic polynomials $q$ on $\mathbb R^3$. Concerning the physics we refer to \cite{GaVi}, Virga \cite{Virga} and the references given there.
Our principal interest lies in the distribution of critical points of $q$ on the sphere $\mathbb S^2$. This question was discussed, using various methods, by Gaeta and Virga \cite{GaVi}, and by Chen, Qi and Virga \cite{CQV}. An interesting observation in \cite{GaVi} concerns the number of maxima of $q$ on the sphere: Theorem \ref{poinhopf} and Proposition \ref{gradientprop} would permit any number of maxima between one and four in nondegenerate settings, but actually only cases with three or four maxima could be observed. The condition $\Delta q=0$ seems to be crucial for this phenomenon, since there is no hope that general real cubic gradient systems should have more than one maximum:
\begin{example}
The polynomial $q(x)=(x_1^2+x_2^2+x_3^2)\cdot (x_1+x_2+x_3)$ admits only one maximum and one minimum on $\mathbb S^2$.
\end{example}
We discuss the class of harmonic cubic homogeneous polynomials in three variables with a somewhat different approach, reducing their investigation to determining the number of real zeros of a real degree six polynomial in one variable. For the latter task, algorithms are available.
\subsection{The setting}
The first task (here as in the cited references) is to reduce the number of parameters. With \eqref{graddef} we have a homogeneous quadratic map
\[
Q:\,\mathbb R^3\to\mathbb R^3, \quad DQ(x) \text{ symmetric,  } \quad{\rm tr}\,DQ(x)=0 \text{  for all  }x
\]
such that the identity $q(x)=\frac13\left<Q(x),x\right>$ holds.
For any $x\in\mathbb R^3$ we define the linear map $L(x)$ by
\[
L(x)y:= \widetilde Q(x,y):=\frac12 D^2Q(0)(x,y),
\]
thus $DQ(x)=2L(x)$ and $L(x)y=L(y)x$ for all $x$ and $y$.
Upon fixing a basis $c_1,\,c_2,\, c_3$ of $\mathbb R^3$, $Q$ is determined by the $\widetilde Q(c_i,c_j)$, due to bilinearity. Similar to Gaeta and Virga \cite{GaVi} we choose the basis in a convenient way but take a different path. First, there exists $c_1$ of norm one such that $q(c_1)$ is minimal on the sphere, and hence $Q(c_1)=L(c_1)c_1=\alpha_1 c_1$ for some $\alpha$. (In contrast to \cite{GaVi} and \cite{CQV} we consider a minimum rather than a maximum, and place it at the east pole of the sphere rather than the north pole. We also eliminate parameters by a different strategy.) Since $L(c_1)$ is symmetric, $c_1$ can be extended to an orthonormal basis by $c_2,\,c_3$ such that 
\[
L(c_1)c_2=\alpha_2c_2,\quad L(c_1)c_3=\alpha_3c_3
\]
for suitable real $\alpha_2,\,\alpha_3$, and $\alpha_1=-(\alpha_2+\alpha_3)$ due to trace zero. From the ansatz
\[
\begin{array}{rcl}
\widetilde Q(c_2,c_2)&=& \beta_1c_1+\beta_2c_2+\beta_3c_3\\
\widetilde Q(c_2,c_3)&=& \gamma_1c_1+\gamma_2c_2+\gamma_3c_3\\
\widetilde Q(c_3,c_3)&=& \delta_1c_1+\delta_2c_2+\delta_3c_3\\
\end{array}
\]
with real coefficients $\beta_i,\gamma_i,\delta_i$ and using symmetry of the tensor $\widetilde Q$, we obtain matrix representations
\[
L(c_2)=\begin{pmatrix}0&\alpha_2&0\\
                      \beta_1&\beta_2&\beta_3\\
                     \gamma_1&\gamma_2&\gamma_3
\end{pmatrix}; \quad L(c_3)=\begin{pmatrix}0&0&\alpha_3\\
                     \gamma_1&\gamma_2&\gamma_3\\
\delta_1&\delta_2&\delta_3
\end{pmatrix}
\]
with respect to the chosen basis. These matrices are symmetric with trace zero, which yields
\[
\gamma_1=0;\,\delta_1=\alpha_3;\,\gamma_2=\beta_3=-\delta_3;\, \gamma_3=\delta_2=-\beta_2.
\]
Thus only four parameters $\alpha_2,\alpha_2,\beta_2,\beta_3$ remain. We record an intermediate result:
\begin{lemma} In coordinates with respect to the basis $c_1,\,c_2,\,c_3$ one has the representation
\begin{equation}\label{symmtens}
Q(x)=\begin{pmatrix} -(\alpha_2+\alpha_3)x_1^2+\alpha_2x_2^2+\alpha_3x_3^2\\
                                      2\alpha_2x_1x_2+\beta_2x_2^2+2\beta_3x_2x_3-\beta_2x_3^2\\
                                      2\alpha_3x_1x_3+\beta_3x_2^2-2\beta_2x_2x_3-\beta_3x_3^2
\end{pmatrix}.
\end{equation}
\end{lemma}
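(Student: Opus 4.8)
The plan is to obtain \eqref{symmtens} by a direct expansion of $Q(x)=\widetilde Q(x,x)$ in the chosen orthonormal basis, substituting the values of $\widetilde Q$ on the basis pairs that have already been pinned down in the preceding paragraphs. No new idea is needed beyond the normalizations already in place; the statement is essentially a bookkeeping record of that computation.

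First I would write $x=x_1c_1+x_2c_2+x_3c_3$ and use the bilinearity and symmetry of the tensor $\widetilde Q$ to reduce everything to the six values $\widetilde Q(c_i,c_j)$ with $i\leq j$:
\[
Q(x)=\widetilde Q(x,x)=\sum_{i} x_i^2\,\widetilde Q(c_i,c_i)+2\sum_{i<j} x_ix_j\,\widetilde Q(c_i,c_j).
\]
Three of these six values are immediate from the construction of the basis: $\widetilde Q(c_1,c_1)=L(c_1)c_1=\alpha_1c_1=-(\alpha_2+\alpha_3)c_1$, while the diagonalization $L(c_1)c_2=\alpha_2c_2$, $L(c_1)c_3=\alpha_3c_3$ gives $\widetilde Q(c_1,c_2)=\alpha_2c_2$ and $\widetilde Q(c_1,c_3)=\alpha_3c_3$. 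The remaining three are supplied by the ansatz for $\widetilde Q(c_2,c_2)$, $\widetilde Q(c_2,c_3)$, $\widetilde Q(c_3,c_3)$, into which I would insert the coefficient identities forced by symmetry and trace-freeness of $L(c_2)$ and $L(c_3)$, namely $\beta_1=\alpha_2$, $\delta_1=\alpha_3$, $\gamma_1=0$, $\gamma_2=\beta_3=-\delta_3$, and $\gamma_3=\delta_2=-\beta_2$.

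Finally I would collect, for each $k$, the coefficient of $c_k$ over all six terms; reading off the three resulting scalar expressions gives exactly the components in \eqref{symmtens}. The only points deserving a little care are the factor $2$ attached to the mixed monomials $x_ix_j$ with $i<j$, and the use of $\beta_1=\alpha_2$ and $\delta_1=\alpha_3$ (the off-diagonal symmetry of $L(c_2)$ and $L(c_3)$) to produce the $\alpha_2x_2^2$ and $\alpha_3x_3^2$ contributions to the first component. I do not expect any genuine obstacle here: the lemma is a purely computational consequence of the symmetry of $\widetilde Q$ together with the normalizations $\alpha_1=-(\alpha_2+\alpha_3)$ and the vanishing-trace relations already established.
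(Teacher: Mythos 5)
Your proposal is correct and takes essentially the same route as the paper: the lemma there is stated as a record of exactly this computation, with the values $\widetilde Q(c_i,c_j)$ pinned down by the symmetry and trace-freeness of $L(c_2)$ and $L(c_3)$ in the preceding paragraphs, and the expansion $Q(x)=\sum_i x_i^2\widetilde Q(c_i,c_i)+2\sum_{i<j}x_ix_j\widetilde Q(c_i,c_j)$ then read off componentwise. You even make explicit the relation $\beta_1=\alpha_2$ (forced by symmetry of $L(c_2)$), which the paper's displayed list of coefficient identities omits but silently uses to produce the $\alpha_2x_2^2$ term in the first component.
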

We will from now on assume that the $c_i$ form the standard basis of $\mathbb R^3$.
\begin{remark}\label{minrem}
The requirement for $q$ to have a minimum at $c_1$ has further implications for the $\alpha_i$. By Proposition \ref{gradientprop}, the derivative of $Q^*|_{\mathbb S^2}$ has the two eigenvalues $3\alpha_2+\alpha_3$ and $\alpha_2+ 3\alpha_3$, both of which must be $\geq 0$. Moreover one finds (e.g. by considering values of $q$ along great circles through $c_1$) that in case $3\alpha_2+\alpha_3=\alpha_2+ 3\alpha_3=0$ (equivalently $\alpha_2=\alpha_3=0$) there is no minimum at $c_1$. Therefore one eigenvalue must be $>0$.
\end{remark}
To determine the one dimensional eigenspaces of $Q$ we note that every $2\times 2$ minor of the matrix
\[
M(x):=\begin{pmatrix}Q(x)&x\end{pmatrix}
\]
must be zero on such an eigenspace, and conversely any nontrivial common zero of these minors gives rise to a one dimensional eigenspace.
For the last two rows of $M$, evaluation of the determinant yields
\begin{equation}\label{minorone}
0=2(\alpha_2-\alpha_3)x_1x_2x_3+3\beta_2x_2^2x_3+3\beta_3x_2x_3^2-\beta_2x_3^3-\beta_3x_2^3,
\end{equation}
while the first two rows yield the condition
\begin{equation}\label{minortwo}
0=-(3\alpha_2+\alpha_3)x_1^2x_2+x_1\,\left(-\beta_2x_2^2-2\beta_3x_2x_3+\beta_2x_3^2\right)+\alpha_2x_2^3+\alpha_3x_2x_3^2.
\end{equation}
Multiplying \eqref{minortwo} by $x_2x_3^2$ one finds
\begin{equation}\label{minortwomod}
0=-(3\alpha_2+\alpha_3)v^2+\left(-\beta_2x_2^2x_3-2\beta_3x_2x_3^2+\beta_2x_3^3\right)\, v+\alpha_2x_2^4x_3^2+\alpha_3x_2^2x_3^4
\end{equation}
with $v=x_1x_2x_3$. Evaluating the first and last row of $M$, we find
\begin{equation}\label{minorthree}
0=(\alpha_2+3\alpha_3)x_1^2x_3+x_1\,\left(\beta_3x_2^2-2\beta_2x_2x_3-\beta_3x_3^2\right)-\alpha_2x_2^2x_3-\alpha_3x_3^3
\end{equation}
and multiplication by $x_2^2x_3$ yields
\begin{equation}\label{minorthreemod}
0=(3\alpha_3+\alpha_2)v^2+\left(\beta_3x_2^3-2\beta_2x_2^2x_3-\beta_3x_2x_3^2\right)\, v-\alpha_2x_2^4x_3^2-\alpha_3x_2^2x_3^4.
\end{equation}
Taking the difference of \eqref{minorthreemod} and \eqref{minortwomod} we obtain a symmetrized version
\begin{equation}\label{minorfourmod}
0=4(\alpha_2+\alpha_3)v^2+\left(\beta_3x_2^3-\beta_2x_2^2x_3+\beta_3x_2x_3^2-\beta_2x_3^3\right)\, v-2(\alpha_2x_2^4x_3^2+\alpha_3x_2^2x_3^4).
\end{equation}
The determinantal conditions are not independent; for instance, any common solution $(y_1,y_2,y_3)^{\rm tr}$ of \eqref{minorone} and \eqref{minortwo}  with $y_2\not=0$ automatically satisfies \eqref{minorthree}.
Our goal is now to reduce the problem of determining (real) one dimensional eigenspaces to a polynomial in one variable. We will have to dispose of some special cases first and then look at the generic case.
\subsection{Special cases} We first note from \eqref{symmtens} that there exists a one dimensional eigenspace $\mathbb R\cdot(1,0,y_3)^{\rm tr}$ with $y_3\not=0$ only if $\beta_2=0$, and mutatis mutandis this holds with labels 2 and 3 interchanged. When $\beta_2=0$ one furthermore sees that $Q$ maps the subspace given by $x_2=0$ to itself. We will investigate these cases in some detail, starting with the most special one.
\subsubsection{The case $\beta_2=\beta_3=0$} Here we have
\[
Q(x)=\begin{pmatrix} -(\alpha_2+\alpha_3)x_1^2+\alpha_2x_2^2+\alpha_3x_3^2\\
                                      2\alpha_2x_1x_2\\
                                      2\alpha_3x_1x_3
\end{pmatrix}
\]
and \eqref{minorone}, \eqref{minortwo} and \eqref{minorthree} specialize to
\[
\begin{array}{rcl}
0&=& 2(\alpha_2-\alpha_3)x_1x_2x_3\\
0&=& x_2\cdot(-(3\alpha_2+\alpha_3)x_1^2+\alpha_2x_2^2+\alpha_3x_3^2)\\
0&=& x_3\cdot((\alpha_2+3\alpha_3)x_1^2-\alpha_2x_2^2-\alpha_3x_3^2)\\
\end{array}
\]
When $\alpha_2\not=\alpha_3$ then the first condition yields three cases for one dimensional eigenspaces (except $\mathbb R c_1$).
\begin{itemize}
\item $x_1=0$ and $\alpha_2x_2^2+\alpha_3x_3^2=0$. This yields real solutions only if $\alpha_2$ and $\alpha_3$ have different signs or one of them equals zero.
\item $x_2=0$ and $(\alpha_2+3\alpha_3)x_1^2-\alpha_3x_3^2=0$. Since $3\alpha_3+\alpha_2\geq 0$, the existence of real solutions depends on the sign of $\alpha_3$.
\item $x_3=0$ and $-(3\alpha_2+\alpha_3)x_1^2+\alpha_2x_2^2=0$. Mutatis mutandis, the previous remark applies.
\end{itemize}
Disregarding the degenerate cases that $\alpha_2$, $\alpha_3$ or one of the eigenvalues vanish (which are easily analyzed), and recalling that one of the $\alpha_i$ must be positive, one always finds that precisely two of the conditions are satisfied, leading to four real solutions (and a total of five one dimensional eigenspaces).\\
This leaves the case $\alpha_2=\alpha_3=:\alpha$, with
\[
Q(x)=\begin{pmatrix} -2\alpha x_1^2+\alpha x_2^2+\alpha x_3^2\\
                                      2\alpha x_1x_2\\
                                      2\alpha x_1x_3
\end{pmatrix}.
\]
Then \eqref{minorone} through \eqref{minorthree} leave only one nontrivial condition
\[
-4x_1^2+x_2^2+x_3^2=0.
\]
Here we have infinitely many one dimensional eigenspaces. The union of these is a quadric united with $\mathbb R c_1$.
\subsubsection{The case $\beta_2=0$, $\beta_3\not=0$} Here  \eqref{minorone}, \eqref{minortwo} and \eqref{minorthree} specialize to
\[
\begin{array}{rcl}
0&=& x_2\left(2(\alpha_2-\alpha_3)x_1 x_3 -\beta_3x_2^2+3\beta_3x_3^2   \right)\\
0&=& x_2\left(-(3\alpha_2+\alpha_3)x_1^2-2\beta_3x_1x_3+\alpha_2x_2^2+\alpha_3x_3^2\right)\\
0&=& (\alpha_2+3\alpha_3)x_1^2x_3+x_1\beta_3(x_2^2-x_3^2)-\alpha_2x_2^2x_3-\alpha_3x_3^3\\
\end{array}
\]
We first determine the one dimensional eigenspaces on the plane defined by $x_2=0$. Excluding $\mathbb R c_1$, the remaining condition is
\[
 (\alpha_2+3\alpha_3)x_1^2-\beta_3 x_1x_3-\alpha_3x_3^2=0
\]
which yields at most two more real eigenspaces. (Note that $\alpha_2+3\alpha_3$ and $\alpha_3$ cannot both be zero.)\\
For one dimensional eigenspaces outside the plane with $x_2=0$ the two remaining conditions are
\[
\begin{array}{rcl}
0&=& 2(\alpha_2-\alpha_3)x_1 x_3 -\beta_3x_2^2+3\beta_3x_3^2  \\
0&=& -(3\alpha_2+\alpha_3)x_1^2-2\beta_3x_1x_3+\alpha_2x_2^2+\alpha_3x_3^2\\\
\end{array}
\]
There are two special subcases to be considered. First, one may have $\alpha_2=\alpha_3$, hence $x_2^2-3x_3^2=0$ from the first condition, and for each solution $(y_2,y_3)$ of the latter one finds two real solutions for $x_1$ in the second condition (noting that the leading and the constant coefficient have different signs). In total, we have then seven one dimensional eigenspaces.\\
Second, one may have $3\alpha_2+\alpha_3=0$, which leaves the conditions
\[
\begin{array}{rcl}
0&=& 8\alpha_2 x_1 x_3 -\beta_3(x_2^2- 3x_3^2)  \\
0&=& -2\beta_3x_1x_3+\alpha_2(x_2^2-3x_3^2)\\
\end{array}
\]
If $4\alpha_2^2-\beta_3^2\not=0$ then we have two real (and also only two complex) solutions for the system, with $x_2^2-3x_3^2=0$ and $x_1=0$, and in total there are five one -dimensional eigenspaces (real as well as complex). In case $4\alpha_2^2-\beta_3^2=0$ we see that there are infinitely many one dimensional eigenspaces; their union forms the quadric $\pm 4x_1x_3-x_2^2+3x_3^2=0$.\\
For the remaining cases it suffices to consider solutions with $x_3=1$. Setting $t:=x_2/x_3$, the first condition shows that
\[
2(\alpha_2-\alpha_2)x_1-\beta_3(t^2-3)=0,
\]
and substitution into the second condition provides the fourth-degree polynomial equation
\[
\begin{array}{rcl}
-(3\alpha_2+\alpha_3)( t^2-3)^2&-&4\left(\beta_3^2(\alpha_2-\alpha_3)-\alpha_2(\alpha_2-\alpha_3)^2\right)(t^2-3)\\
&+&4(\alpha_2-\alpha_3)^2\cdot(3\alpha_2+\alpha_3)
\end{array}
\]
The quadratic polynomial 
\[
-(3\alpha_2+\alpha_3)s^2-4\left(\beta_3^2(\alpha_2-\alpha_3)-\alpha_2(\alpha_2-\alpha_3)^2\right)+
4(\alpha_2-\alpha_3)^2\cdot(3\alpha_2+\alpha_3)
\]
admits one positive and one negative root; note the signs of the leading and the constant coefficients. Since 
$t^2-3=s^*$ has two real solutions for every $s^*>0$, we see that there are at least two real solutions for the quartic equation in $t$. Concerning tools for a more detailed analysis, see the following section.
\subsubsection{The case $\beta_3=0$, $\beta_2\not=0$} Mutatis mutandis, this is completely analogous to the case we just discussed.
\subsection{The generic case} Having disposed of special cases, we may now require the following generic conditions:
\begin{equation}\label{genericcase}
\beta_2\not=0,\quad \beta_3\not=0,\quad \alpha_2\geq \alpha_3
\end{equation}
and we know that $\alpha_2+\alpha_3>0$ since there is a minimum at $c_1$ (recall Remark \ref{minrem}).
Moreover the only eigenspace $\mathbb R (y_1,y_2,y_3)^{\rm tr}$ with $y_2=0$ or $y_3=0$ is $\mathbb R c_1$, so the remaining eigenspaces correspond to the common zeros of \eqref{minorone} and \eqref{minorfourmod}.
\subsubsection{The case $\alpha_2=\alpha_3$} We deal with this nongeneric subcase of the generic case first. Then \eqref{minorone} simplifies to the homogeneous equation
\[
3\beta_2x_2^2x_3+3\beta_3x_2x_3^2-\beta_2x_3^3-\beta_3x_2^3=0,
\]
which has three distinct real homogeneous solutions: Indeed, by setting $\gamma:=\beta_3/\beta_2$ and $t:=x_3/x_2$ homogeneous solutions of the above correspond to zeros of the polynomial
\[
\mu(t)=-t^3+3\gamma t^2-3t-\gamma.
\]
Its derivative 
\[
\mu^\prime(t)=-3(t^2-2\gamma t-1)
\]
has two real zeros, one positive and one negative. Each zero $t^*$ of the derivative satisfies ${t^*}^2=2\gamma t^*+1$ and furthermore
${t^*}^3=2\gamma {t^*}^2+t^*$. Altogether this yields
\[
\mu(t^*)=(2\gamma^2+2)t^*,
\]
therefore $\mu$ admits a negative local minimum and a positive local maximum, hence three real zeros. Each corresponding homogeneous zero $(x_2,x_3)$ yields a quadratic equation \eqref{minorfourmod} for $v$ positive leading coefficient and negative constant coefficient, hence there are two real solutions for $v$. Altogether we have six common homogeneous zeros of \eqref{minorone}, \eqref{minorfourmod}, and a total of seven one dimensional eigenspaces.
\subsubsection{The case $\alpha_2\not=\alpha_3$} We may then furthermore require that $2(\alpha_2-\alpha_3)=1$, since multiplying each coefficient $\alpha_2,\alpha_3, \beta_2,\beta_3$ by a positive number leaves solutions of \eqref{minorone} and \eqref{minorfourmod} unchanged, and the minimum at $c_1$ remains a minimum. We thus have $\alpha_3=\alpha_2-\frac12$, and the positivity condition on the eigenvalues is equivalent to $\alpha_2>\frac38$. Setting $t:=x_3/x_2$, equation \eqref{minorone} implies that 
\[
v=\beta_2t^3-3\beta_3t^2-3\beta_2t+\beta_3,
\]
and substitution into \eqref{minorfourmod} yields a degree six polynomial
\[
\rho(t)=\sum_{j=0}^6\gamma_jt^j
\]
whose zeros correspond to the one dimensional eigenspaces of $Q$. A straightforward computation yields
\begin{equation}
\begin{array}{rcl}
\gamma_6&=& \beta_2^2(8\alpha_2-3);\\
\gamma_5&=&16\beta_2\beta_3(-3\alpha_2+1);\\
\gamma_4&=& -48 \alpha_2\beta_2^2+72\alpha_2\beta_3^2+14\beta_2^2-21\beta_3^2-2\alpha_2+1;\\
\gamma_3&=&40\beta_2\beta_3(4\alpha_2-1);\\
\gamma_2&=&72\alpha_2\beta_2^2-48\alpha_2\beta_3^2-15\beta_2^2+10\beta_3^2-2\alpha_2;\\
\gamma_1&=&8\beta_2\beta_3(-6\alpha_2+1);\\
\gamma_0&=& \beta_3^2(8\alpha_2-1).
\end{array}
\end{equation}
This may look a bit unwieldy, but note that all the information about real eigenspaces of $Q$ is now contained in the one-variable polynomial $\rho$ with parameters that satisfy $\alpha_2\geq\frac38$, $\beta_2\not=0\not=\beta_3$  but are otherwise arbitrary. Computing a Sturm sequence for $\rho$ (essentially computing the gcd of $\rho$ and $\rho^\prime$ by Euclid's algorithm, see Gantmacher \cite{Gant}, Ch.~V for details) and counting sign changes yields an algorithm to determine the number of real zeros. Moreover, parameter combinations for which the gcd is not constant will indicate a transition in the number of real zeros. We will not carry out the program here (which will likely require some computing power and specialized algorithms), but are satisfied with with the observation that an algorithmic path is now open for determining the number of real eigenspaces.
\section{Appendix}
\subsection{Some linear algebra}
For the sake of completeness we prove the following commonly known fact:
For real $n\times n$ matrices $A$ and $B$ one has
\[
\det \begin{pmatrix} A&-B\\
                                   B&A\end{pmatrix}\geq 0.
\]
First assume that $A$ is invertible. Then
\[
\begin{pmatrix} A&-B\\
                                   B&A\end{pmatrix}=\begin{pmatrix}A & 0\\ 0&A\end{pmatrix}\cdot\begin{pmatrix}I_n &0\\ C & I_n\end{pmatrix}\cdot\begin{pmatrix}I_n &- C\\ 0 & I_n+C^2\end{pmatrix}
\]
where $I_n$ denotes the $n\times n$ identity matrix and $C=A^{-1}B$. The determinant of the first factor is equal to $(\det A)^2$, hence $>0$; the second factor has determinant one, and for the third one we get
\[
\det(I_n+C^2)=\det(I_n+ iC)\cdot \overline{\det(I_n+iC)}>0.
\]
In the case of non-invertible $A$ apply the argument to $\rho I_n+A$ for small $\rho>0$ and use continuity as $\rho \to 0$.\\

To apply this to real representations of complex linear maps, let $w=x+iy\in\mathbb C^n$ with $x,\,y\in\mathbb R^n$, and write a linear map $L$ from $\mathbb C^n$ to $\mathbb C^n$ in the form
\[
L= A+iB; \quad L(x+iy)= \left(Ax-By\right) + i\left(Bx+Ay\right);
\]
hence $L$ has a real matrix representation
\[
\begin{pmatrix}A&-B\\ B&A\end{pmatrix}.
\]
\subsection{A hybrid proof of Bezout's theorem}
This proof uses Proposition \ref{resultant} as an algebraic tool, and properties of the Brouwer degree on the analytic side. It is not central to the topic of the present paper but its inclusion is unproblematic and it may be seen as informative anyway. \\
By Proposition \ref{resultant}, in the complex affine space of structure coefficients, the coefficient sets which correspond to homogeneous polynomial maps $Q$ with a nilpotent ($v\not=0$ and $Q(v)=0$) form the hypersurface defined by the resultant. This hypersurface has real codimension two, hence any two points in its complement can be connected by a continuous curve. These points correspond to two homogeneous polynomial maps without nilpotents. By homotopy invariance they have the same Brouwer degree. To summarize, any two homogeneous polynomial maps of degree $m$ without nilpotents have the same degree. For the special map
\[
\widetilde Q:\,\mathbb C^n\to\mathbb C^n,\quad x\mapsto\begin{pmatrix}x_1^m\\ \vdots \\x_n^m\end{pmatrix}
\]
one easily verifies that the number of solutions of $\widetilde Q(x)=(1,\ldots,1)^{\rm tr}$ is equal to $m^n$, with each solution having multiplicity one.

\end{document}